\newlength{\dhatheight}
\newcommand{\doublehat}[1]{%
    \settoheight{\dhatheight}{\ensuremath{\hat{#1}}}%
    \addtolength{\dhatheight}{-0.35ex}%
    \hat{\vphantom{\rule{1pt}{\dhatheight}}%
    \smash{\hat{#1}}}}
\def\ie{\emph{i.e., }}
\def\eg{\emph{e.g., }}
\def\R{\mathbb R}
\def\Z{\mathbb Z}
\def\D{\mathbb D}
\def\DD{\tilde {\mathbb D}}
\def\U{\mathcal U}
\def\V{\mathcal V}
\def\C{\mathbb C}
\def\T{\mathbb T}
\def\AA{\EuScript A}
\def\ZZ{\mathcal Z}
\def\e{\varepsilon}
\newtheorem*{theoremM}{Main Theorem}
\newtheorem{theorem}{Theorem}[section]
\newtheorem{claim}[theorem]{Claim}
\newtheorem{lemma}[theorem]{Lemma}
 \theoremstyle{remark}
\newtheorem{remark}[theorem]{Remark}
\newtheorem{example}[theorem]{Example}
\theoremstyle{remark}
\renewcommand\S{\mathbb S}
\begin{document}
\author{Andrey Gogolev$^\ast$}
\title[Blow-ups of partially hyperbolic dynamical systems]{Surgery for partially hyperbolic dynamical systems I. \\
Blow-ups of invariant submanifolds. }
\thanks{$^\ast$The author was partially supported by NSF grant DMS-1204943}
\begin{abstract}
We suggest a method to construct new examples of partially hyperbolic diffeomorphisms. We begin with a partially hyperbolic diffeomorphism $f\colon M\to M$ which leaves invariant a submanifold $N\subset M$. We assume that $N$ is an Anosov submanifold for $f$, that is, the restriction $f|_N$ is an Anosov diffeomorphism and the center distribution is transverse to $TN\subset TM$. By replacing each point in $N$ with the projective space (real or complex) of lines normal to $N$ we obtain the blow-up $\hat M$. Replacing $M$ with $\hat M$ amounts to a surgery on the neighborhood of $N$ which alters the topology of the manifold. The diffeomorphism $f$ induces a canonical diffeomorphism $\hat f\colon \hat M\to \hat M$. We prove that under certain assumptions on the local dynamics of $f$ at $N$ the diffeomorphism $\hat f$ is also partially hyperbolic. We also present some modifications such as the connected sum construction which allows to ``paste together" two partially hyperbolic diffeomorphisms to obtain a new one. Finally, we present several examples to which our results apply.
\end{abstract}
\date{}
 \maketitle

\section{Introduction}
Let $M$ be a closed manifold. A diffeomorphism $f\colon M \to M$ is {\it partially hyperbolic} 
if the tangent bundle $TM$ splits into $Df$-invariant continuous subbundles 
$TM= E^{s}\oplus E^c \oplus E^{u}$ such that 
\begin{equation}
\label{def_ph}
 \|Df(v^s) \| <\lambda< \|Df (v^c)\| < \mu< \|Df(v^u) \|
\end{equation}
for some Riemannian metric $\|\cdot\|$, some $\lambda<1<\mu$ and all unit vectors $v^s\in E^s$, $v^c\in E^c$ and $v^u\in E^u$.

Similarly a flow $\varphi^t\colon M \to M$ is {\it partially hyperbolic} 
if the tangent bundle $TM$ splits into $Df$-invariant continuous subbundles 
$TM= E^{s}\oplus E^c \oplus E^{u}$ such that 
\begin{equation}
\label{def_ph_flow}
 \|D\varphi^t(v^s) \| <\lambda^t< \|D\varphi^t (v^c)\| < \mu^t< \|D\varphi^t(v^u) \|,\; t\ge 1,
\end{equation}
for some Riemannian metric $\|\cdot\|$, some $\lambda<1<\mu$ and all unit vectors $v^s\in E^s$, $v^c\in E^c$ and $v^u\in E^u$.

Partial hyperbolicity was introduced into smooth dynamics by Hirsch-Pugh-Shub~\cite{HPS} and by Brin-Pesin~\cite{BP} (motivated by a paper of Sacksteder~\cite{sack}). The importance of these definitions is well justified by the deep connections of partial hyperbolicity to stable ergodicity and robust transitivity. The discussions on stable ergodicity and robust transitivity and the original references can be found in recent surveys~\cite{HHU2, HP2, CHHU, HP}.

Examples of partially hyperbolic dynamical systems can be roughly classified (up to homotopy, finite iterates and finite covers) into the following (overlapping) classes:
\begin{enumerate}
\item Algebraic examples induced by affine diffeomorphisms of Lie groups;
\item Geodesic flows in negative curvature;
\item Skew products with slow dynamics in the fiber and partially hyperbolic dynamics in the base;
\item Surgery examples;
\item Skew products with Anosov (or partially hyperbolic) dynamics in the fiber and slow dynamics in the base (fiberwise Anosov);
\item Twisting of Anosov flows.
\end{enumerate}
The first three classes of examples are classical and a lot of research in the past decades was focused on these examples. Some of the algebraic examples can be viewed as fiberwise Anosov (class 5). Recently, it was demonstrated that this class also contains some non-algebraic examples~\cite{GORH}. Even more recently, new examples (the last class 6) were discovered by composing the existing examples (such as time one maps of Anosov flows) with homotopically non-trivial diffeomorphisms which respect cone fields, see~\cite[Section 5]{HP} for an overview and references therein.

As outlined in the abstract, the current paper makes a contribution to the surgery constructions of partially hyperbolic diffeomorphisms. First surgery constructions of Anosov flows were discovered by Franks-Williams~\cite{FW}  and by Handel-Thurston~\cite{HT}. Since then many more 3-dimensional Anosov flows were constructed by using surgery. The approach used in these surgery constructions is to make ``hyperbolic pieces'' by cutting the ambient manifold of a known example along well-positioned (\eg transverse to the flow) codimension one submanifolds and then create new examples by assembling the ``hyperbolic pieces'' in various ways. For a long time this type of constructions were restricted to the realm of 3-dimensional Anosov flows, but recently the cut-and-paste approach have spread out into the classification program of 3-dimensional partially hyperbolic diffeomorphisms as well as to higher dimensions.

Surgery constructions here are quite different because we make use of the Anosov submanifold (which is also well-positioned with respect to the dynamics, but is not of codimension one) which is tangent to the stable and unstable distributions and works equally well for diffeomorphisms and for flows. The examples which we work out in this paper all belong to the class of fiberwise Anosov partially hyperbolic dynamical systems. This new pool of examples vastly expands this class of fiberwise Anosov partially hyperbolic  dynamical systems. We plan to further develop the blow-up approach and produce more examples, some of which are not fiberwise Anosov.

We are not aware of any prior appearance of blow-ups in partially hyperbolic dynamics. However, blow-ups have been known to be a useful construction tool in dynamics for a long time. At least, it goes back to work of Denjoy~\cite{D}, where he used one dimensional blow-up of an orbit to give an example of non-transitive circle diffeomorphism with an irrational rotation number. Katok~\cite{K} used the blow-up of a fixed point in his construction of Bernoulli diffeomorphism of $\D^2$ in order to pass from $\S^2$ to $\D^2$; also Katok-Lewis~\cite{KL} used the blow-up of a fixed point to produce examples of non-standard actions of $SL(n,\Z)$.

\section{The Main Theorem}
\subsection{Dominant Anosov submanifolds}
Let $f\colon M\to M$ be a partially hyperbolic diffeomorphism with an invariant splitting $TM=E^s\oplus E^c\oplus E^u$ controlled by $\lambda<1<\mu$ as in~(\ref{def_ph}). An invariant submanifold $N\subset M$ is called {\it Anosov} if
$$
TN=E^s\oplus E^u.
$$
Further an Anosov submanifold $N$ is called {\it dominant}\footnote{The domination condition is analogous to the well-known ``center-bunching" condition on the center distribution. We use a different term here because we view domination as a property of the fast distributions rather than the center.} if for all $x\in N$ and all unit vectors $v^c\in E^c(x)$
\begin{equation}
\label{eq_dominant}
{\lambda'}\le \|Df v^c\|\le \mu'\;\;\;\mbox{with}\;\;\;\; \frac{\lambda'}{\mu'}> \max(\lambda, \mu^{-1})
\end{equation}
An important special case is when $\mu^{-1}=\lambda$ and the domination inequality is
\begin{equation}
\label{eq_dominant2}
\sqrt{\lambda}\le \|Df v^c\|\le \sqrt{\lambda^{-1}}
\end{equation}
We proceed to impose a strong assumption on local dynamics at $N$. Namely, we will assume that the dynamics in the neighborhood of $N$ is {\it locally fiberwise}. That means that a neighborhood of $N$ can be smoothly identified with $\D^k\times N$, where $\D^k=\{ x\in\R^k: \;\|x\|<1\}$, so that the dynamics $f|_{\D^k\times N}$ is the product
\begin{equation}
\label{eq_fiberwise}
f(x,y)=(Ax, f_N(y)),\;\;\; (x,y)\in \D^k\times N\cap f^{-1}(\D^k\times N),
\end{equation}
where $f_N$ is the Anosov map given by the restriction $f|_N$ and $A\colon\R^k\to\R^k$ is a hyperbolic linear map. Moreover, we assume that the distribution $E^s\oplus E^u$ is integrable on $\D^k\times N$ and is tangent to the $N$-fibers; that is, for all $(x,y)\in \D^k\times N$ we have 
\begin{equation}
\label{eq_fiberwise2}
Di_xT_yN=E^s\oplus E^u(x,y),
\end{equation}
 where $i_x\colon N\to \D^k\times N$ is given by $i_x(y)=(x,y)$.

Note that locally fiberwise condition implies, in particular, that the normal bundle of $N$ is trivial.

Similarly, we can  define dominant Anosov submanifold $N\subset M$ for a partially hyperbolic flow $\varphi^t\colon M\to M$. In the flow setting, the formula~(\ref{eq_fiberwise}) becomes
$$
\varphi^t(x,y)=(A^t(x), \varphi^t_N(y)),
$$
where $\varphi^t_N$ is identified with $\varphi^t|_N$ and $A^t\colon \R^k\to\R^k$ is a hyperbolic linear flow. The condition~(\ref{eq_fiberwise2}) becomes
$$
Di_xT_yN\subset E^s \oplus E^u(x,y).
$$

\begin{remark} 
\label{rem_center}
The restriction $E^c|_N$ is a ``horizontal" subbundle in the $(x,y)$-coordinates, because it is the only $Df$-invariant subbundle which is transverse to $TN$. Therefore, given the local form~(\ref{eq_fiberwise}), one can determine whether the submanifold $N\subset M$ is dominant by looking at the eigenvalues of $A$.
\end{remark}

\begin{remark} In this paper the locally fiberwise condition is viewed as a feature which makes proving our results an easier task. One can also view it as a bug which  crashes some potential applications.
\end{remark}

\begin{remark}
Existence of an Anosov submanifold is an obstruction to accessibility property of $f$. And the important role of the Anosov tori for 3-dimensional partially hyperbolic diffeomorphisms $f\colon M\to M$ was revealed in~\cite{HHU}. Rodriguez Hertz-Rodriguez Hertz-Ures conjecture that absence of Anosov tori implies ergodicity of a partially hyperbolic diffeomorphism $f\colon M^3\to M^3$. In the case when $M$ is a nilmanifold ($\neq\T^3$)  they verified this conjecture~\cite{HHU}.
\end{remark}

\subsection{The blow-up of an Anosov submanifold}
\label{sec_setup}
We begin by blowing up the disk $\D^k$ at the origin $0$. This amounts to replacing $0$ with the space of lines which pass through $0$. More precisely, the disk $\D^k$ is being replaced with the following subspace of $\D^k\times\R P^{k-1}$
\begin{equation}
\label{eq_D}
\tilde \D^k =\{(x, \ell(x)):\, x\in\D^k, x\in \ell(x)\},
\end{equation}
where $\ell(x)$ are lines passing though the $0$ and $x$. Then $\pi\colon\tilde\D^k\to\D^k$ given by $(x,\ell(x))\mapsto x$ collapses the projective space $\R P^{k-1}$ to $0\in\D^k$ and is one-to-one otherwise. It is easy to see that $\tilde \D^k$ is diffeomorphic to the connected sum $\D^k\#\R P^k$.

Now, by  taking the product with $N$, we obtain the blow-up $\tilde \D^k\times N\to\D^k\times N$ and then use the identity map to extend to the map $\pi\colon\hat M\to M$, which we still denote by $\pi\colon\hat M\to M$. By construction, $\pi$ collapses $\R P^{k-1}\times N$ to $N$ and is one-to-one otherwise. We will call $\R P^{k-1}\times N\subset\hat M$ {\it the exceptional set.}

Now let $A\colon\R^k\to\R^k$ be a linear map. Then, by linearity, $x\in\ell(x)$ if and only if $Ax\in A(\ell(x))$ and, hence, the formula $(x,\ell(x))\mapsto (x, A(\ell(x)))$ defines a diffeomorphism $\tilde A\colon\tilde\R^k\to\tilde\R^k$ of the blown-up $\R^k$, which we then restrict to $\tilde \D^k$.

Now, assuming that a partially hyperbolic diffeomorphism $f\colon M\to M$ is locally fiberwise at $N\subset M$, we define $\hat f\colon\tilde \D^k\times N\to\tilde \D^k\times N$ by
$$
\hat f\colon (x,y)\mapsto (\tilde A(x), f_N(y))
$$
and extend $\hat f$ to the rest of $\hat M$ using $f$. We conclude that if dynamics of $f$ is locally fiberwise in a neighborhood of $f$ then there is a canonical diffeomorphism $\hat f\colon\hat M\to\hat M$ which fits into the commutative diagram
\begin{equation}
\label{eq_blowup}
\xymatrix{
\hat M\ar_\pi[d]\ar^{\hat f}[r] & \hat M\ar_\pi[d]\\
M\ar^f[r] & M
}
\end{equation}
\begin{remark}
Note that, by construction, $\hat M$ can be obtained from $M$ through the following surgical procedure: remove the open set $\D^k\times N$ from $N$ and then replace it with $\tilde\D^k\times N$. In general, such surgery affects the algebraic topology of the underlying manifold.
 \end{remark}
 \begin{remark} To obtain the diagram~(\ref{eq_blowup}) one only needs to have an $f$-invariant submanifold $N$, see, \eg~\cite{stark}.
 \end{remark}
Analogous discussion (which we omit) in the continuous time setting yields the blown-up flow $\hat\varphi^t\colon\hat M\to \hat M$. 
Now we are ready to state our main result.

\begin{theoremM} Let $f\colon M\to M$ ($\varphi^t\colon M\to M$) be a partially hyperbolic diffeomorphism (flow) and let $N\subset M$ be an invariant, dominant, Anosov submanifold of $M$. Also assume that the dynamics is locally fiberwise in a neighborhood of $N$. Let $\pi\colon\hat M\to M$ be the blow-up of $N$. Then the induced diffeomorphism $\hat f\colon \hat M\to\hat M$ (flow $\hat\varphi^t\colon \hat M\to\hat M$) is partially hyperbolic.
\end{theoremM}
The same result remains true if we assume that $E^s$ and $E^u$ are smooth distributions rather than assuming their joint integrability to the $N$-fibers~(\ref{eq_fiberwise2}). We do not pursue the proof of such modification here because all the examples which we consider here do satisfy~(\ref{eq_fiberwise2}).
Also, we would like to remark that the Main Theorem generalizes in a fairly straightforward way to the case when the fiber diffeomorphism $f_N\colon N\to N$ is assumed to be partially hyperbolic rather than Anosov.

\begin{remark}
If $f$ preserves a volume $vol$ then diffeomorphism $\hat f\colon \hat M\to \hat M$ preserves a smooth measure $\pi^*vol$ whose density vanishes on the exceptional set. It would be very interesting to obtain a volume preserving version of the Main Theorem. However, it doesn't seem that this can be done in a straightforward way. One can apply the trick of Katok-Lewis~\cite{KL}, which is to alter the smooth structure at $N$, and obtain a volume preserving induced diffeomorphism $\tilde f\colon \hat M\to \hat M$. Then it becomes clear that, in order to retain partial hyperbolicity, stronger domination property of $N$ is needed. This would make impossible many of examples which we construct in this paper. On top of this, controlling the center distribution (estimates in Section~\ref{sec_center}) becomes a very formidable problem.
\end{remark}

\begin{example}
We demonstrate that the Main Theorem provides new examples. Let $H$ be the 3-dimensional Heisenberg group of upper-triangular $3\times 3$ matrices. There exists a lattice $\Gamma\subset H\times H$ and a hyperbolic automorphism $H\times H\to H\times H$ such that $M\stackrel{\mathrm{def}}{=} H\times H/\Gamma$ is a compact nilmanifold and the automorphism induces an Anosov diffeomorphism $A\colon M\to M$. Construction of such Anosov diffeomorpisms is due to Smale-Borel~\cite{smale}. It is clear from the construction that $A$ can be viewed as a partially hyperbolic diffeomorphism with a 4-dimensional center distribution. When considered this way $A$ has  an Anosov torus $\T^2\subset M$ and, after making a perturbation in a neighborhood of this torus, the Main Theorem applies and yields a partially hyperbolic diffeomorphism $\hat A\colon\hat M\to \hat M$. Of course the new diffeomorphism is not Anosov anymore and has fixed points of indices 1 and 5. One can check that the manifold $\hat M$ (unlike $M$) is rich in higher homotopy groups (the universal cover of $\hat M$ is homotopy equivalent to the infinite wedge sum $\bigvee_i\S^4_i$) and one can deduce, by looking at $\pi_4$, that the universal cover of $\hat M$ is not diffeomorphic to any Lie group. Also note that $\hat A$ cannot be homotopic to a time one map of a geodesic flow simply because $\hat M$ is even dimensional. We discuss the construction of $\hat A$ in more detail later, see Example~\ref{nil_example}.
\end{example}

% Bt Hurewitz H_4=pi_4=Z^\infty, but a lie group retracts to a compact lie group which has finitely generated H_4

\subsection{The structure of the paper}
In the next section we present some variations of the Main Theorem such as the complex blow-up version and the connected sum construction for partially hyperbolic diffeomorphisms. Section 4 is devoted to discussion of examples to which our results apply, both diffeomorphism and flow examples. The last Section 5 contains the proofs.

The author would like to thank Federico Rodriguez Hertz for many useful conversations and feedback on the first draft of this paper.

\section{Some variations of the Main Theorem}
\subsection{A complex blow-up}
We describe a version of the Main Theorem where one uses a complex blow-up instead of a real one. This amounts to a different surgery on the neighborhood of $N$ which does not affect the fundamental group of the manifold.

As before, we assume that $N\subset M$ is a dominant Anosov submanifold for a partially hyperbolic diffeomorphism $f\colon M\to M$. Further we assume that $N$ has even codimension $2k$ and that the neighborhood of $N$ is identified with $\D^k_\C\times N$, where $\D^k_\C=\{x\in\C^k:\,\,\|x\|<1\}$ so that $f$ is locally fiberwise on $\D^k_\C\times N$, that is, stable and unstable distributions satisfy~(\ref{eq_fiberwise2}) and the restriction $f|_{\D^k_\C\times N}$ is given by
$$
(x,y)\mapsto (Ax, f_N(y)),
$$
where $A$ is a hyperbolic complex-linear map.

With such a setup we can follow through the discussion of Section~\ref{sec_setup} simply by working over $\C$ instead of $\R$, and arrive at the induced map $\hat f\colon\hat M_\C\to\hat M_\C$, where $\hat M_\C$ is obtained from $M$ by replacing $\D^k_\C\times N$ with $\tilde \D^k_\C\times N$. Here $\tilde \D^k_\C$ is the complex blow-up of $\D^k_\C$ and one can check that $\tilde\D^k_\C$ is diffeomorphic to $\D^k_\C\# \overline{\C P}^k$, see \eg~\cite[Proposition 2.5.8]{Huy}. The setup of the complex blow-up for flows is analogous.

\begin{theorem} 
\label{thm_complex}
Let $f\colon M\to M$ ($\varphi^t\colon M\to M$) be a partially hyperbolic diffeomorphism (flow) and let $N\subset M$ be an invariant submanifold which satisfies the above assumptions. Then the induced partially hyperbolic diffeomorphism $\hat f\colon \hat M_\C\to\hat M_\C$ (flow $\hat\varphi^t\colon\hat M_\C\to\hat M_\C$) is partially hyperbolic.
\end{theorem}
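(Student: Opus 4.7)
The plan is to follow the proof of the Main Theorem with real structures systematically replaced by complex structures. Away from the exceptional set $\C P^{k-1}\times N$, the projection $\pi\colon\hat M_\C\to M$ is a diffeomorphism, so the invariant splitting on $M$ pulls back to give $\hat E^s$, $\hat E^c$, $\hat E^u$ there. All the nontrivial work takes place in a neighborhood of the exceptional set, where one must build the splitting by hand and verify the inequalities~(\ref{def_ph}) with uniform constants up to and including that set.

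I would identify $\tilde\D^k_\C$ with an open neighborhood of the zero section of the tautological complex line bundle over $\C P^{k-1}$. At a point $(\ell,y)$ of the exceptional set this gives the natural splitting
\[
T_{(\ell,y)}\hat M_\C\;=\;T_\ell\C P^{k-1}\;\oplus\;\ell\;\oplus\; T_yN,
\]
where $\ell$ is treated as its own two-real-dimensional tangent space and $T_yN=E^s(y)\oplus E^u(y)$ carries the Anosov splitting of $f_N$. I then set $\hat E^s(\ell,y)=E^s(y)$, $\hat E^u(\ell,y)=E^u(y)$, and $\hat E^c(\ell,y)=T_\ell\C P^{k-1}\oplus\ell$, and extend these bundles continuously to the pulled-back splitting away from the exceptional set; $D\hat f$-invariance of this local assignment is immediate from the product form $\hat f=(\tilde A,f_N)$ together with the fact that $\tilde A$ preserves both the horizontal and vertical subbundles on $\C P^{k-1}$.

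The technical heart is the derivative estimate for the projective action $[A]\colon\C P^{k-1}\to\C P^{k-1}$. Via the canonical identification $T_\ell\C P^{k-1}\cong \mathrm{Hom}_\C(\ell,\C^k/\ell)$, the differential $D[A]_\ell$ is given by conjugation with $A$, and its operator norm is pinched above and below by ratios of moduli of complex eigenvalues of $A$. By Remark~\ref{rem_center} and the dominant hypothesis~(\ref{eq_dominant}), these ratios lie strictly in $(\lambda,\mu)$, which simultaneously places $\|D\hat f|_{T_\ell\C P^{k-1}}\|$ and $\|D\hat f|_\ell\|$ in $(\lambda,\mu)$ as required for $\hat E^c$; the stable and unstable inequalities for $\hat E^s$ and $\hat E^u$ pass through from $f_N$. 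The main obstacle, exactly as in the real case, is checking that the locally defined splitting joins continuously and $D\hat f$-invariantly to the pulled-back splitting across the boundary of the exceptional set while preserving the partial hyperbolicity constants uniformly---but since the entire analysis depends only on the moduli of the complex eigenvalues of $A$ and on $f_N$, neither of which distinguishes $\C$-linearity on $\C^k$ from $\R$-linearity on $\R^{2k}$, the argument from the real Main Theorem transfers essentially verbatim, with the identification $T_\ell\R P^{k-1}\cong\mathrm{Hom}_\R(\ell,\R^k/\ell)$ replaced by its complex analog.
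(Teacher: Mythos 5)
Your high-level strategy is correct: away from the exceptional set the splitting is pulled back through $\pi$, the work is local, and the key input is the action of $\tilde A$ on $T\C P^{k-1}\cong\mathrm{Hom}_\C(\ell,\C^k/\ell)$ controlled by ratios of moduli of eigenvalues of $A$. But there is a substantive gap, and it is exactly where the paper does the most work.

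You claim that the derivative estimate at the exceptional set ``simultaneously places $\|D\hat f|_{T_\ell\C P^{k-1}}\|$ and $\|D\hat f|_\ell\|$ in $(\lambda,\mu)$,'' i.e.\ a one-step pinching. This is false for a general hyperbolic $A$. The pointwise operator norm of $D\hat A$ on $\C P^{k-1}$ is governed by ratios of \emph{singular values} of $A$ in the chosen metric, not eigenvalues; what the eigenvalue ratios control is the asymptotic behavior of $\|D\hat A^n\|$, and only up to a multiplicative constant $C$. Once a constant $C>1$ appears, you cannot verify~(\ref{def_ph}) directly: orbits can re-enter the neighborhood of the exceptional set arbitrarily often, and each visit contributes a factor of $C$, potentially destroying the exponential separation. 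The proof of the Main Theorem addresses precisely this by building a one-parameter family of Riemannian metrics $g_\e$ on the blown-up disk — flat outside radius $\e$, collapsing to the round (real case) or Fubini--Study (complex case) metric at the exceptional locus — and then sending $\e\to 0$, so that the constant $C$ (which is shown to be \emph{independent} of $\e$) is absorbed by the partial hyperbolicity of $f$ on the ever-larger region where the metric is unaltered. Your proposal never mentions any metric on $\tilde\D^k_\C$, let alone a family, and so leaves no mechanism to absorb $C$. In the complex case building that family is not a formality: one needs the doubly-warped form $dt^2+\rho_\e(t)^2 h + t^2\,d\varphi^2$ adapted to the Hopf fibration $S^1\to\S^{2k-1}\to\C P^{k-1}$ so that the metric remains smooth across $\C P^{k-1}$ and so that $\tilde A$ acts as a skew product over $\hat A\colon\C P^{k-1}\to\C P^{k-1}$ with conformal action on the $\D^1_\C$-fibers. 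Your ``transfers essentially verbatim'' hides exactly this construction, and your appeal to the fact that the analysis ``depends only on moduli of eigenvalues'' misreads where the difficulty lies: the complex blow-up has a genuinely different geometry from the real blow-up of $\R^{2k}$ ($\C P^{k-1}$ rather than $\R P^{2k-1}$, a nontrivial circle bundle rather than a two-sheeted cover), and the family of metrics must be rebuilt from scratch to respect it.

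One smaller point: you also need to compare the actual center distribution $\hat E^c$ to the horizontal model near (but off) the exceptional set, using Hölder continuity of $E^c$ at $N$, to transfer the linear estimate on $\hat H$ to a nonlinear estimate on $\hat E^c$; your proposal defines the splitting only \emph{at} the exceptional set and asserts continuity without indicating how the rates survive the perturbation.
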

The proof of this theorem is similar to the proof of the Main Theorem and we discuss necessary modifications in Section~\ref{sec_complex}

\subsection{Surgery variations}
First we remark that the submanifold $N$ does not have to be connected. For example $N$ could have several connected components which are being cyclically permuted by $f$.
\subsubsection{Multiple blow-ups} Another observation is that the blow-up procedure could be carried out with respect to several Anosov submanifolds. For example, assume that $N_1, N_2\subset M$ are both Anosov submanifolds such that the Main Theorem applies to $N_1$ and Theorem~\ref{thm_complex} applies to $N_2$. Then, after performing the real blow-up of $N_1$ we obtain a partially hyperbolic diffeomorphism $\hat f\colon\hat M\to\hat M$ which still leaves $N_2$ invariant. Because the blow-down map $\pi\colon\hat M\to M$ preserves all dynamical structures (including the stable and unstable distributions) away from the exceptional set, we can further perform a complex blow-up at $N_2\subset \hat M$ and obtain a partially hyperbolic diffeomorphism $\doublehat f\colon\doublehat M\to\doublehat M$ .

Same remark is applicable in the flow case.
\subsubsection{Connected sums along the invariant submanifolds} 
\label{section_cs}
Now assume that $N_i\subset M_i$ are invariant under $f_i\colon M_i\to M_i$, $i=1,2$, and that both $f_1$ and $f_2$ satisfy the assumptions of the Main Theorem. Moreover assume that both $N_1$ and $N_2$ are diffeomorphic to a manifold $N$ and that the local forms of $f_1$ and $f_2$ at the invariant submanifold are the same (after identifying both neighborhoods of $N_1$ and $N_2$ with $\D^k\times N$)
$$
(x,y)\mapsto (Ax,f_N(x)).
$$ 
Then one can glue $f_1$ and $f_2$ together as follows. First, perform the ``spherical" blow-up for both $N_1$ and $N_2$; that is, we replace $\D^k\times N_i$ with $\bar \D^k\times N_i$, $i=1,2$. Here $\bar \D^k$ is defined as
\begin{equation*}
\bar \D^k =\{(x, r(x)):\, x\in\D^k, x\in r(x)\},
\end{equation*}
where $r(x)$ is the ray based at $0$ and passing through $x$. Both resulting manifolds $\bar M_1$ and $\bar M_2$ have boundaries diffeomorphic to $\S^{k-1}\times N$. Each $f_i$ induces a diffeomorphism $\bar f_i\colon\bar M_i\to\bar M_i$, $i=1,2$. Moreover, on the neighborhood of the boundary $\S^{k-1}\times N\times [0,1)$, both $\bar f_1$ and $\bar f_2$ have the same form
$$
(s,y,t)\mapsto (\hat As, f_N(y), a(s)t),
$$
where $\hat A\colon\S^{k-1}\to\S^{k-1}$ is the spherical projectivization of $A$ and $a(s)=\|As\|$ (here $s\in\S^{k-1}$ is viewed as a unit vector in $\R^k$). 

Hence we can paste $\bar M_1$ and $\bar M_2$ together to form the connected sum $\bar M$ along $\S^{k-1}\times N$ and also paste $\bar f_1$ and $\bar f_2$ together to form the {\it connected sum} $\bar f\colon \bar M\to \bar M$. The above local form near the boundary implies that $\bar f$ is a smooth diffeomorphism and it easily follows from (the proof of) the Main Theorem that $\bar f$ is partially hyperbolic.

Notice that if $M_1=M_2$ and $f_1=f_2$ then $\bar M$ is the topological double of $M_1$ and $\bar f\colon\bar M\to\bar M$ is a ``partially hyperbolic double" of $f$.
Also notice that if $f\colon M\to M$ admits two different invariant submanifolds $N_i\subset M$, $i=1,2$ then in the same way one can ``spherically" blow-up $f$ at both $N_1$ and $N_2$ and then ``connect sum with itself." 

Finally we notice that the above observations can be combined, such as doing multiple blow-ups and multiple gluings at the same time.

\section{Examples}
This section is devoted to constructions of examples to which the Main Theorem and its variations can be applied. We first discuss discrete time examples and then continuous time examples. All examples considered here are fiberwise Anosov diffeomorphisms or flows. 

\subsection{Fiberwise Anosov diffeomorphisms and flows}
Let $N$ and $X$ be smooth compact manifolds and let $p\colon M\to X$ be a smooth fiber bundle with fiber $F$; this means that $p\colon M\to X$ is a locally trivial fiber bundle given by smooth charts $p^{-1}(\U_\alpha)\simeq \U_\alpha\times F$, $\U_\alpha\subset X$. Given $x\in X$ we denote by $N_x$ the fiber $p^{-1}(x)$. Let $T^{\parallel}M$ be the submanifold of the tangent bundle $TM$ which consists of all vectors tangent to the fibers of $p$
$$
T^{\parallel}M=\bigcup_{x\in X} TN_x
$$

Now, given a smooth fiber bundle $N\to M\to X$ we define fiberwise Anosov systems as follows.  A diffeomorphism $F\colon M\to M$ is called {\it fiberwise Anosov} if there exists a diffeomorphism $f\colon X\to X$, an invariant splitting $T^\parallel M=E^s\oplus E^u$, a constant $\lambda\in(0,1)$ and a smooth Riemannian metric on $T^\parallel M$ such that
\begin{itemize} 
\item
$F$ fibers over $f$; that is, the diagram 
$$
\xymatrix{
M\ar_p[d]\ar^{F}[r] & M\ar_p[d]\\
X\ar^{f}[r] & X
}
$$
commutes; 
\item the following inequalities hold for all unit vectors $v^s\in E^s$ and $v^u\in E^u$
$$
 \|DF(v^s) \| <\lambda< \lambda^{-1}< \|DF(v^u) \|.
$$
\end{itemize}

Similarly, a flow $\Phi^t\colon M\to M$ is called {\it fiberwise Anosov} if there exists a flow $\varphi^t\colon X\to X$, an invariant splitting $T^\parallel M=E^s\oplus E^u$, a constant $\lambda\in(0,1)$ and a smooth Riemannian metric on $T^\parallel M$ such that
\begin{itemize} 
\item
$\Phi^t$ fibers over $\varphi^t$; that is, the diagram 
$$
\xymatrix{
M\ar_p[d]\ar^{\Phi^t}[r] & M\ar_p[d]\\
X\ar^{\varphi^t}[r] & X
}
$$
commutes for all $t$; 
\item the following inequalities hold for all unit vectors $v^s\in E^s$, $v^u\in E^u$ and $t\ge 1$
$$
 \|D\Phi^t(v^s) \| <\lambda^t< \lambda^{-t}< \|D\Phi^t(v^u) \|.
$$
\end{itemize}

\subsection{Examples of fiberwise Anosov dynamical systems}
In order to present examples to which the Main Theorem can be applied we will consider smooth fiber bundles with torus fiber and fiberwise Anosov diffeomorphisms and flows whose fiberwise dynamics is affine. 

\subsubsection{Principal fiber bundles and $B$-diffeomorphisms}
Recall that a smooth fiber bundle $\T^d\to M\to X$ is called {\it principal} if $M$ admits a smooth free $\T^d$-action $y\mapsto y+ g$, $g\in \T^d$, whose orbits are precisely the fibers of the bundle. Hence, all torus fibers of a principal torus bundle are canonically identified with $\T^d$ up to a translation. Given an automorphism $B\colon\T^d\to\T^d$ we say that a diffeomorphism $F\colon M\to M$ is a $B$-diffeomorphism if
$F(y_0+ y)=F(y_0)\cdot B(y)$
for all $y_0\in M$ and all $y\in\T^d$. In other words, $F$ preserves the fibers and is locally given by the formula
\begin{equation}
\label{eq_Bmap}
F\colon (x,y)\mapsto (f(x), By+\varphi(x)),\,\,\, (x,y)\in \U_\alpha\times\T^d,
\end{equation}
where $\varphi\colon \U_\alpha\to\T^d$ depends on the choice of charts at $x$ and at $f(x)$. Clearly, if $B$ is hyperbolic then a $B$-diffeomorphism is fiberwise Anosov. We refer to~\cite{GORH} for a thorough discussion of $B$-diffeomorphisms.

Potentially, $B$-diffeomorphisms with hyperbolic (or partially hyperbolic) $B\in SL(d,\Z)$ provide a rich class of partially hyperbolic diffeomorphisms. Theorem~8.2 in~\cite{GORH} gives a general criterion for partial hyperbolicity of a $B$-diffeomorphism. Loosely speaking, it says that a $B$-diffeomorphism is partially hyperbolic provided that the base dynamics is dominated by $B$. However, as explained in~\cite{GORH}, it is difficult to create partially hyperbolic $B$-diffeomorphism of non-trivial fiber bundles as there is no known general method of verifying the assumption of this criterion, \ie controlling the base dynamics of the $B$-diffeomorphisms. 

One application of The Main Theorem is that it provides a surgery machinery to create new partially hyperbolic $B$-diffeomorhisms from the known examples. This is achieved by applying the Main Theorem and Theorem~\ref{thm_complex} to invariant and periodic torus fibers  and by taking connected sums and ``self-connected sums" along invariant torus fibers. We proceed with description of examples.

\subsubsection{Examples of partially hyperbolic $B$-diffeomorphisms} Here we present some known examples of $B$-diffeomorphisms which are partially hyperbolic: products, the example of~\cite{GORH}, nilmanifold automorphisms; and explain how our results can be applied to these examples.
\begin{example}[Product]
\label{ex_product} 
The trivial example of a $B$-diffeomorphism is, of course, the product diffeomorphism ${id_X\times B\colon X\times\T^d\to X\times\T^d}$, where $B$ is hyperbolic. 
Note that, formally speaking, the Main Theorem does not apply to this example because we do not have a hyperbolic fixed point in the base, however we can modify it so that the Main Theorem becomes applicable. Namely, let $A\colon\R^k\to\R^k$ be a hyperbolic linear automorphism, which is dominated by $B$; \ie\footnote{This is simply a restatement of the domination assumption~(\ref{eq_dominant})}
\begin{equation}
\label{eq_dominant3}
\begin{split}
&\frac{\max\{|\lambda|:\, \lambda\in spec(A)\}}{\min\{|\lambda|:\, \lambda\in spec(A)\}}<\min\{|\lambda|: \lambda\in spec(B),|\lambda|>1\}\\
&\frac{\min\{|\lambda|:\, \lambda\in spec(A)\}}{\max\{|\lambda|:\, \lambda\in spec(A)\}}>\max\{|\lambda|: \lambda\in spec(B),|\lambda|<1\}
\end{split}
\end{equation}
Then one can homotope $id_X$ to a diffeomorphism $f\colon X\to X$ so that $f$ coincides with $A$ on a disk $\D^k\subset X$ and $f\times B$ is still partially hyperbolic. Then the Main Theorem applies and yields a partially hyperbolic diffeomorphism $\widehat{f\times B}\colon (X\#\R P^k)\times\T^d\to (X\#\R P^k)\times\T^d$. This is not of much interest as this diffeomorphism is merely a product again. However, the diffeomorphism $f\times B$ becomes much more meaningful for connected sum constructions (which we explain once we have more examples to connect sum with).
\end{example}

\begin{example}[Over the $K3$-surface] 
 \label{GORH_example}
Given a hyperbolic automorphism $A\colon\T^2\to\T^2$ where exists a principal fiber bundle $\T^2\to M\to K3$ over the $K3$-surface  whose total space $M$ is simply connected and a partially hyperbolic $A^2$-map $F\colon M\to M$ which fibers over $f\colon K3\to K3$ (see~\cite{GORH}).
Further, it is easy to see from the construction in~\cite{GORH} that (after passing to a finite iterate) the base map $f\colon K3\to K3$ has a fixed point $x_0$ such that on a disk $\D^4$ centered at $x_0$ the base diffeomorphism $f$ is given by $x\mapsto A\oplus A(x)$. Note that $A^2$ does not dominate $A\oplus A$ as we require strict inequalities in~(\ref{eq_dominant3}). However we can perturb $f$ in $C^1$ topology, and $F$ accordingly, so that $F$ is still partially hyperbolic and $f$ about $x_0$ is given by $x\mapsto A'\oplus A'(x)$, where $A'$ has eigenvalues closer to 1 and hence is dominated by $A^2$. Then locally, in the neighborhood $\D^4\times\T^2_{x_0}$ of the invariant fiber $\T^2_{x_0}=p^{-1}(x_0)$, diffeomorphism $F$ is given by
$$
(x,y)\mapsto (A'\oplus A'(x), A^2y+\varphi(x)).
$$
In order to apply the Main Theorem at $\T_{x_0}$ we need to further modify $F$ in order to bring it locally fiberwise form~(\ref{eq_fiberwise}). Namely, we replace $F$ with a diffeomorphism $F'$ which coincides with $F$ outside $\D^4\times\T^2_{x_0}$ and is given by
$$
(x,y)\mapsto (A'\oplus A'(x), A^2y+\psi(x))
$$
on $\D^4\times\T^2_{x_0}$, where $\psi$ coincides with $\varphi$ near the boundary $\partial\D^4$ and equals to 0 on smaller disk so that on the smaller neighborhood $F'$ has the locally fiberwise form
$$
(x,y)\mapsto (A'\oplus A'(x), A^2y).
$$
Because this procedure does not affect the base map $f$, the diffeomorphism $F'$ is still partially hyperbolic by~\cite[Theorem 8.2]{GORH}. Now both the Main Theorem and Theorem~\ref{thm_complex} could be applied at $x_0$ and yield partially hyperbolic diffeomorphisms 
$\hat {F'}\colon \hat M\to \hat M$ and $\hat{F'_{\C}}\colon \hat M_\C\to\hat M_\C$.
\end{example}

\begin{example}[Nilmanifold automorphisms]
 \label{nil_example}
 Recall that by work of Mal$'$cev~\cite{Mal} any compact nilmanifold $M$ can be represented as a homogeneous coset space
 $$
 M=N/\Gamma,
 $$
 where $N$ is a simply connected nilpotent Lie group and $\Gamma\subset N$ is a cocompact lattice. Further, again by~\cite{Mal}, if $\ZZ(N)\simeq\R^d$ is the center of $N$, then $\Gamma\cap\ZZ(N)$ is a cocompact lattice in $\ZZ(N)$ and, hence, $\ZZ(N)/\Gamma\cap\ZZ(N)$ can be identified with the torus $\T^d$. Note that $\ZZ(N)$ act on $N$ by left translation and this action descends to a free action of $\T^d$ on $M$. Hence $M$ is the total space of a principal fiber bundle
 $$
 \T^d\to M\to X,
 $$
 where $X=N/\Gamma\cdot\ZZ(N)$ is nilmanifold modeled on the simply connected nilpotent Lie group $N/\ZZ(N)$. This bundle is non-trivial provided that $N$ is non-abelian.
\end{example}
Now let $C\colon N\to N$ be an automorphism and let $B$ be its restriction to the characteristic subgroup $\ZZ(N)$. Assume that $C$ preserves a cocompact lattice $\Gamma$ and that $B$ is hyperbolic. Then $C$ induces a nilmanifold automorphism $C\colon M\to M$ and $B$ becomes a hyperbolic toral automorphism. Further we can view $C$  as $B$-diffeomorphism over the quotient automorphism of $X$.

Some nilmanifold automorphisms of this type can be perturbed to $B$-diffeomorphisms to which the Main Theorem applies. For instance, such examples can be found within the classical Borel-Smale family of Anosov automorphism of a 6-dimensional 2-step nilmanfold $M$ (see original description~\cite{smale} and~\cite{BW} for a thorough exposition). Namely given a hyperbolic automorphism $A\colon\T^2\to\T^2$ there exists an automorphism $F\colon M\to M$ which fibers over $A\oplus A$
$$
\xymatrix{
M\ar[d]\ar^{F}[r] & M\ar[d]\\
X\ar^{A\oplus A}[r] & X
}
$$
Now given an invariant fiber $\T^2_{x_0}$ one can perform exactly the same modifications in the neighborhood of $\T^2_{x_0}$ to obtain a partially hyperbolic $A^2$-diffeomorphism to which the Main Theorem and its modifications apply.

\subsubsection{Further surgery examples of partially hyperbolic $B$-diffeomorphisms}
We would like to point out that connect-summing along invariant tori explained in Subsection~\ref{section_cs} works well for all of the above examples. Indeed, the local form of base map $A'\oplus A'$ near the fixed point is the same for the latter examples and we can also choose the same local form for the the product example. Further, by arranging for multiple invariant fibers, a repeated connected sum can be taken which yield a partially hyperbolic $A^2$-diffeomorphisms of non-trivial principal $\T^2$-bundles over manifolds of the form
$M\#nK3\#m\T^4$, where $M$ is an arbitrary manifold coming from the product example.

Finally we notice, that ``self-connected sum" construction also applies to these examples with two or more invariant fibers.

\subsubsection{Examples of fiberwise Anosov flows} 
Here we describe examples of fiberwise Anosov flows on torus bundles $\T^d\to M\to X$ whose structure group is $SL(d,\Z)$ to which the flow version of the Main Theorem applies.

\begin{example}[Suspension]
 \label{ex_suspension}
 Consider a product $f\times B\colon M\times\T^d\to M\times\T^d$, where $B\colon\T^d\to\T^d$ is an automorphism. Let $(M\times\T^d)_{f\times B}$ be the mapping torus of $f\times B$, \ie
 $$
 (M\times\T^d)_{f\times B}=M\times\T^d\times[0,1]/(x,y,1)\sim (f(x),By,0).
 $$
 We view $(M\times\T^d)_{f\times B}$ as the total space of the torus bundle over the mapping torus $M_f$ of $f\colon M\to M$.
 Then the suspension flow $\Phi^t\colon  (M\times\T^d)_{f\times B}\to  (M\times\T^d)_{f\times B}$ fibers over the suspension flow $\varphi^t\colon M_f\to M_f$ of $f\colon M\to M$
 $$
 \xymatrix{
(M\times\T^d)_{f\times B}\ar[d]\ar^{\Phi^t}[r] & (M\times\T^d)_{f\times B}\ar[d]\\
M_f\ar^{\varphi^t}[r] & M_f
}
 $$
 Moreover, if $B$ is hyperbolic then $\Phi^t$ is fiberwise Anosov.
 
 Further assume that $f$ has a hyperbolic fixed point $p$ and is given by $x\mapsto Ax$ in a chart centered at $p$. Then the restriction of $\varphi^t$ to the orbit of $p$ is the unit-speed flow on the circle $S^1$ and the restriction of the fiberwise Anosov flow is the suspension flow of $B$; that is
 $$
  \xymatrix{
\T^d_ B\ar[d]\ar^{\Phi_B^t}[r] & \T^d_B\ar[d]\\
S^1\ar^{\varphi^t}[r] & S^1
}
 $$
It is easy to see that, after choosing appropriate coordinates in the neighborhood of the invariant submanifold $\T^d_B\subset (M\times\T^d)_{f\times B}$ the flow $\Phi^t$ is given by
$$
(x,y)\mapsto (A^tx,\Phi_B^t(y)),
$$
where $A^t$ is the hyperbolic saddle whose time-1 map is $A$ and $\Phi_B^t$ is the suspension flow on $\T^d_B$. Now we assume that $B$ dominates $A$ as in~(\ref{eq_dominant3}) (for example one can pick $f$ first and then pick $B$ so that~(\ref{eq_dominant3}) holds). Then the Main Theorem applies to $\Phi^t$ and yields a fiberwise Anosov flow $\hat{\Phi}^t\colon\widehat{(M\times\T^d)}_{f\times B}\to \widehat{(M\times\T^d)}_{f\times B}$ by blowing-up the mapping torus $\T^d_B$. One can check that the resulting flow is the suspension of $\hat f\times B\colon\hat M\times\T^d\to\hat M\times \T^d$, where $\hat f$ is the blow-up of $f$ at $p$. However, this example still has value as a building block for connected sum constructions.

Of course, more generally, one can use any of the discrete time fiberwise Anosov examples discussed before in place of $B$ in the suspension construction.
\end{example}

\begin{example}[Higher rank suspension] 
 \label{ex_suspension2}
 Another way to construct examples, which allows to dispose of taking the product with $f$, is to consider higher rank $k\ge 3$ suspensions. 
 
 Let $B_1, B_2, \ldots B_k$ be commuting automorphisms of the torus $\T^d$. They define an action $\bar B\colon\Z^k\to Aut(\T^d)$. Let $\Z^k$ act on $\T^d\times\R^k$ by
 $$
 \bar n(x,v)=(\bar B(\bar n)x, v-\bar n)
 $$
 The higher rank mapping torus
 $$
 \T^d_{\bar B}=(\T^d\times \R^k)/\Z^k
 $$
 is a smooth closed manifold and the action of $\R^k$ on $\T^d\times\R^k$ by translations
 $$
 u(x,t)=(x, t+u)
 $$
 descends to an $\R^k$ action $F\colon\R^k\times\T^d_{\bar B}\to \T^d_{\bar B}$. This actions fibers over the action of $\R^k$ on $\T^k$
  $$
  \xymatrix{
\T^d_ {\bar B}\ar[d]\ar^{F(u)}[r] & \T^d_{\bar B}\ar[d]\\
\T^k\ar^{u}[r] & \T^k
}
 $$
 
Now given a non-zero primitive integral vector $\bar n\in\Z^k$ we obtain the flow $\Phi^t_{\bar n}\colon \T^d_{\bar B}\to \T^d_{\bar B}$ by taking the restriction
$
\Phi^t_{\bar n}=F(t\bar n)$. Flow $\Phi^t_{\bar n}$ fibers over a periodic flow on $\T^k$. Assume that $\bar B(\bar n)$ is hyperbolic (which is easy to arrange) and pick an $\Phi^t_{\bar n}$-invariant mapping torus $N\stackrel{\mathrm{def}}{=}\T^d_{\bar B(\bar n)}\subset \T^d_ {\bar B} $ over a periodic orbit in the base. The normal neighborhood of $N$ can be identified with $\D^{k-1}\times N$ and, locally, the flow is given by
$$
(x,y)\to(x,\Phi^t_N(y)),
$$
where $\Phi^t_N$ is the suspension flow of $\bar B(\bar n)$. We can perturb the flow locally so that the local form becomes
$$
(x,y)\to(A^tx,\Phi^t_N(y)),
$$
where $A^t$ is a ``slow" hyperbolic saddle. Now the Main Theorem applies to the Anosov submanifold $N$ and yields a partially hyperbolic flow on $\hat \T^d_ {\bar B}$. Further, one can form a connected sum of this example with the previous Example~\ref{ex_suspension}.
\end{example}

\begin{example}[Tomter example: suspension of the geodesic flow]
Let $G=PSL(2,\R)$ and let $\Gamma\subset G$ be a torsion-free cocompact lattice acting on $G$ by right multiplication. The the geodesic flow $d^t$ on the unit tangent bundle of a closed surface $T^1S=G/\Gamma$ is given by left multiplication by $diag(e^{t/2}, e^{-t/2})$. Let $\rho\colon\Gamma\to GL(4,\Z)$ be a representation. Then the semi-direct product $\Gamma_\rho\!\!\ltimes \Z^4$ acts on the right on $G\times\R^4$ as follows:
$$
(\gamma,\bar n)\colon (g,v)\mapsto (g\gamma, \rho(\gamma^{-1})v+\bar n)
$$
This action is  smooth, free, properly discontinuous and cocompact. Therefore the quotient $M\stackrel{\mathrm{def}}{=}G\times\R^4/\Gamma_\rho\!\!\ltimes\Z^4$ is a closed smooth manifold and it is easy to see that $M$ is the total space of the fiber bundle $\T^4\to M\to T^1S$ whose structure group is $Im(\rho)\subset GL(4,\Z)$. Clearly the action of $\Gamma_\rho\!\!\ltimes \Z^4$ fibers over the action of $\Gamma$ on $G$ and the product flow $d^t\times id\colon (g,v)\mapsto(d^tg,v)$ descends to a flow $\Phi^t\colon M\to M$ which fibers over the geodesic flow:
$$
  \xymatrix{
M\ar[d]\ar^{\Phi^t}[r] & M\ar[d]\\
T^1S\ar^{d^t}[r] & T^1S
}
$$
Tomter~\cite[Chapter 4]{tomter} proved that one can arrange representation $\rho$ so that this flow is fiberwise Anosov (and, in fact, a homogeneous Anosov flow).

Further we assume that the fiberwise hyperbolicity dominates the geodesic flow in the base so that $\Phi^t$ is a partially hyperbolic with center distribution being transverse to the torus fibers. Let $\alpha$ be a closed geodesic in $T^1S$ and let $B\colon \T^4\to\T^4$ be the monodromy automorphism over $\alpha$. Then the mapping torus $\T^4_B$ over $\alpha$ is a $\Phi^t$ invariant Anosov submanifold and a calculation shows that in a neighborhood $\D^2\times \T^4_B$ the flow $\Phi^t$ is given by
$$
(x_1, x_2, y)\mapsto (e^tx_1, e^{-t}x_2, \Phi^t_By),
$$
where $\Phi^t_B$ is the suspension flow on $\T^4_B$. Now assume that the length $T$ of $\alpha$ is sufficiently small so that $B$ dominates $diag(e^T, e^{-T})$. Under this assumption the Main Theorem applies to $\T^4_B$ and yields a partially hyperbolic fiberwise Anosov flow $\hat \Phi^t\colon\hat M\to\hat M$ over the blow-up of the geodesic flow along $\alpha$.
\end{example}

\begin{remark} 
We point out that verifying the above assumptions --- partial hyperbolicity and existence of a short geodesic --- is a non-trivial matter. The difficulty comes from the fact that Tomter's approach is to work with an arithmetic lattice $\Gamma\subset G$ so that $\rho\colon \Gamma\to GL(4,\Z)$ extends to a representation $\rho\colon G\to GL(4,\R)$. Then $M$ is the homogeneous space $G_\rho\!\!\ltimes \R^4/ \Gamma_\rho\!\!\ltimes\Z^4$ and $\Phi^t$ is a homogeneous flow for which Tomter is able to verify the fiberwise Anosov property. The author plans a separate paper on fiberwise Anosov dynamical systems where the Tomter example will be revisited and the above assumptions verified. The author also plans to describe further fiberwise Anosov flows which are not homogeneous and to which the Main Theorem can be applied (Note that the above examples are homogeneous.)
\end{remark}

\begin{remark} Note also that according to our definitions the time-1 map of a fiberwise Anosov flow is a fiberwise Anosov diffeomorphism. Hence the discrete time version of the Main Theorem applies to the time-1 maps of the above examples. Also one can form partially hyperbolic connected sums of these time one maps with the product Example~\ref{ex_product}.
\end{remark}

\section{The proof of the Main Theorem}
\subsection{A family of Riemannian metrics on $\tilde \D^k$}
Let $\e_0$ be a small positive constant. We begin the proof with a description of a family of Riemannian metrics $g_\e$, $\e\in(0,\e_0)$, on $\tilde \D^k$~(\ref{eq_D}). These metrics will be constructed so that each metric $g_\e$ in the family coincides with the canonical flat metric near the boundary of $\tilde \D^k$ and restricts to the round metric of curvature $\e^{-2}$ on $\R P^{k-1}\subset\tilde \D^k$.

First we give an alternate description of $\tilde \D^k$ given by~(\ref{eq_D}) as a quotient manifold. Consider
\begin{equation*}
\bar \D^k =\{(x, r(x)):\, x\in\D^k, x\in r(x)\},
\end{equation*}
where $r(x)$ is the ray based at $0$ and passing through $x$. Polar coordinates on $\D^k$ yield the identification $\bar\D^k\simeq \S^{k-1}\times [0,1)$. Under this identification the map $\bar\D^k\to\tilde\D^k$, which sends the ray to the unique line containing the ray, becomes the quotient map
$$
\S^{k-1}\times [0,1)\to \S^{k-1}\times [0,1)/\sim
$$
with the gluing $\sim$ is given by $(s,0)\sim(-s,0)$, where $s\mapsto-s$ is the antipodal map.

Let $\rho\colon [0,1)\to \R^+$ be a smooth function which is $C^\infty$ flat at $0$ and let $ds^2$ be the standard round metric of curvature 1 on $\S^{k-1}$. Then the warped metric 
\begin{equation*}
\label{eq_warped}
dt^2+\rho(t)^2ds^2
\end{equation*}
 (see \eg~\cite[Chapter 1]{Petersen}) on $\bar\D^k$ factors through the quotient map to a smooth Riemannian metric on $\tilde \D^k$.  Hence we can define the family of metrics $g_\e$, $\e\in(0,\e_0)$, on $\tilde\D^k$ in the warped form
 $$
 g_\e=dt^2+\rho_\e(t)^2ds^2,
 $$
 where $\rho_\e\colon[0,1)\to\R^+$ is chosen so that
 $$
 \rho_\e(t)=
 \begin{cases} \e, \,\,\,t\le\e/2\\
 t, \,\,\,t\ge\e
 \end{cases}
 $$
 and 
 \begin{equation}
 \label{eq_rho_condition}
 t\le \rho_\e(t)\le\e,\,\,\,\,\, t\in[\e/2,\e].
 \end{equation}
 
 Let $can$ be the canonical Euclidean metric on $\D^k$, $can=dx_1^2+dx_2^2+\ldots dx_k^2$. In the polar coordinates $(t,s)\in[0,1)\times\S^{k-1}$ this metric takes warped form $can=dt^2+t^2ds^2$. Hence, by the definition of $g_\e$, the blow-down map $\pi\colon(\tilde \D^k, g_\e)\to (\D^k, can)$ is an isometry when restricted to $\{(t,s): t>\e\}$. Also note that the restriction of $g_\e$ to $\{(t,s): t<\e/2\}$  is the direct sum $dt^2+\e^2ds^2$.
 
\subsection{Basic domination estimate} Here will prove a basic lemma which is the core for the proof of partial hyperbolicity of $\hat f\colon \hat M\to\hat M$. 

Recall that $A\colon \D^k\to\D^k$ is a hyperbolic linear automorphism and $\tilde A\colon\DD^k\to\DD^k$ is the induced diffeomorphism introduced in Section~\ref{sec_setup}.\footnote{More precisely, $A\colon\R^k\to\R^k$ is a hyperbolic linear automorphism and we abuse notation by writing $A\colon\D^k\to\D^k$ for the restriction $A|_{A^{-1}(\D^k)}$. Such abuse of notation is harmless because we are only interested in local dynamics.}

Because $E^c$ is ``horizontal" on $N$ the domination assumption~(\ref{eq_dominant}) implies that
$$
\lambda'\le\min\{|\lambda|,\,\,\lambda\in spec(A)\},\,\,\,\,\,\, \mu'\ge \max\{|\lambda|,\,\,\lambda\in spec(A)\}.
$$
Hence, again by~(\ref{eq_dominant}) (note the strict inequality), there exists $\xi>0$ such that if we let
\begin{equation*}
\nu=\max\{|\lambda|,\,\,\lambda\in spec(A)\}+\xi,\,\,\,\tau=\min\{|\lambda|,\,\,\lambda\in spec(A)\}-\xi.
\end{equation*}
then, by the second inequality of~(\ref{eq_dominant})
\begin{equation}
\label{eq_nu_tau}
\lambda<\frac{\tau}{\nu}<\frac{\nu}{\tau}<\mu.
\end{equation}
Denote by $\|\cdot\|_\e$ the norm induced by $g_\e$.
\begin{lemma} 
\label{lemma_estimate}
Given the the induced map $\tilde A$ and the family of metrics $g_\e$ as above, there exists a constant $C>0$ (independent of $\e$) such that for any finite orbit $\{x, \tilde Ax, \tilde A^2x,\ldots \tilde A^{n}x\}\subset\DD^k$ and any $v\in T_x\DD^k$ the following inequalities hold
$$
C^{-1}(\tau/\nu)^n\|v\|_\e\le\|D\tilde A^nv\|_\e\le C (\nu/\tau)^n\|v\|_\e
$$
\end{lemma}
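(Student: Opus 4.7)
The plan is to work in spherical polar coordinates $(r,s)\in[0,1)\times\S^{k-1}$ on $\bar\D^k$, which descend through the antipodal identification at $r=0$ to coordinates on $\tilde\D^k$. In these coordinates $\tilde A$ takes the explicit form
$$
\tilde A(r,s) = \bigl(r\,a(s),\ \hat A(s)\bigr),\qquad a(s):=\|As\|,\quad \hat A(s):=As/a(s),
$$
and along any orbit $(r_j,s_j):=\tilde A^j(r_0,s_0)$ one has the crucial telescoping identity $r_j=r_0\,\|A^j s_0\|$. After choosing an adapted inner product on $\R^k$ so that $\tau^j\|v\|\le\|A^j v\|\le\nu^j\|v\|$ for all $j\ge 0$, the warped structure of $g_\e$ produces an orthonormal frame $(\partial_r,\,\rho_\e(r)^{-1}e_\sigma)$, where $e_\sigma$ is a round-unit tangent to $\S^{k-1}$.

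I would then write $D\tilde A_{x_j}$ in this frame and observe that it is upper triangular: the diagonal blocks are $a(s_j)\in[\tau,\nu]$ and $(\rho_\e(r_{j+1})/\rho_\e(r_j))\,D\hat A_{s_j}$, and the upper-right entry $(r_j/\rho_\e(r_j))\,da_{s_j}$ is uniformly bounded by $\nu$, since $r/\rho_\e(r)\le 1$ and $|da|\le\nu$. The $n$-fold composition $P_n := M_{n-1}\cdots M_0$ remains upper triangular, with $(1,1)$ entry $\prod_j a(s_j)=\|A^n s_0\|$ and $(2,2)$ block telescoping neatly to $(\rho_\e(r_n)/\rho_\e(r_0))\,D\hat A^n_{s_0}$.

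The core estimate is for the $(2,2)$ block. From the formula $D\hat A^n_{s_0}(w)=\|A^n s_0\|^{-1}\,\Pi_{T_{s_n}\S^{k-1}}(A^n w)$, its $g_\e$-operator norm is bounded by $(\rho_\e(r_n)/\rho_\e(r_0))\,\|A^n w\|/(\|A^n s_0\|\,\|w\|)$. Substituting $r_n=r_0\|A^n s_0\|$ together with the piecewise description $\rho_\e(r)\asymp\max(r,\e)$, I would verify in each of the four cases (according to whether $r_0$ and $r_n$ lie above or below $\e$) that this ratio is at most $C(\nu/\tau)^n$: in the outer-outer case the prefactor cancels $\|A^n s_0\|$ leaving $\|A^n w\|/\|w\|\le\nu^n$; in the inner-inner case the prefactor is trivial and $\|A^n w\|/\|A^n s_0\|\le\nu^n/\tau^n$; the mixed cases reduce to one of the previous by substituting the identity for $r_n$. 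The $(1,1)$ block contributes at most $\nu^n\le(\nu/\tau)^n$, while the $(1,2)$ cross-block is handled by the recursive formula for $P_n$ combined with the geometric sum $\sum_{k<n}(\nu/\tau)^k\le C(\nu/\tau)^n$, the same identity $r_k=r_0\|A^k s_0\|$ ensuring that the radial weights cancel.

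Summing the block bounds yields $\|D\tilde A^n v\|_\e\le C(\nu/\tau)^n\|v\|_\e$ uniformly in $\e\in(0,\e_0)$ and in the orbit. The lower bound follows by applying the upper bound to $\tilde A^{-1}$, since the extremal eigenvalue magnitudes of $A^{-1}$ are $\nu^{-1}$ and $\tau^{-1}$, whose ratio is again $\nu/\tau$. I expect the main obstacle to be uniform control in $\e$ across the transition region $r\in[\e/2,\e]$ and, more seriously, bounding the cross-block $(1,2)$: a naive step-by-step estimate picks up an extra factor of $\nu$ per iterate, and it is precisely the telescoping identity $r_j=r_0\|A^j s_0\|$ that produces the cancellation needed to keep the constant $C$ independent of $n$.
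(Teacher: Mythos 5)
Your strategy is genuinely different from the paper's. The paper partitions a finite orbit into five segments $O_1,\dots,O_5$ according to the radial coordinate, proves the estimate separately in the flat region (Lemma~\ref{lemma_estimate2}, trivially), in the inner product region (Lemma~\ref{lemma_core}), handles the two transition segments by a uniform bound on their length, and then multiplies the pieces. You instead keep the whole orbit together, write $D\tilde A^n$ as a product of upper-triangular $2\times 2$ block matrices in the warped orthonormal frame $(\partial_t,\rho_\e(t)^{-1}e_\sigma)$, and estimate the three blocks of the product. That is a legitimate reorganization and the $(1,1)$ and $(2,2)$ block analyses you give are correct, including the case analysis using $r_n=r_0\|A^ns_0\|$ and $\rho_\e(r)\asymp\max(r,\e)$.

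However, there is a genuine gap in the $(1,2)$ cross-block estimate, and it is precisely at the point you flag as the ``more serious'' obstacle. Carrying out the recursion for $P_n=M_{n-1}\cdots M_0$ with $b_j=\frac{r_j}{\rho_\e(r_j)}\nabla a(s_j)$ and $\Delta_j=\frac{\rho_\e(r_j)}{\rho_\e(r_0)}D\hat A^j_{s_0}$, the factors $\rho_\e(r_j)$ do cancel, but after substituting $r_j=r_0\|A^js_0\|$ and summing the geometric series with $\|D\hat A^j\|\le C(\nu/\tau)^j$ you are left with
$$
\|\beta_n\|\;\lesssim\;\frac{r_0\,\|A^ns_0\|}{\rho_\e(r_0)}\Big(\frac{\nu}{\tau}\Big)^n\;=\;\frac{r_n}{\rho_\e(r_0)}\Big(\frac{\nu}{\tau}\Big)^n.
$$
The prefactor $r_n/\rho_\e(r_0)$ is \emph{not} bounded independently of $\e$: for an orbit entering near $r_0\sim\e$ along an expanding direction and exiting at $r_n\sim 1$, it is of order $\e^{-1}$. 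So the radial weights do not fully cancel; they leave exactly the quantity the paper has to tame. The paper does this by the segment decomposition: on the inner segment $O_3$ one has the standing constraint $\AA^n(s)t<\e/2$ while $\rho_\e(t)=\e$, so the corresponding prefactor is $<1/2$; on the flat segments the estimate is the trivial $\nu^n$; and the transition segments contribute a bounded constant because $A$ traverses them in a uniformly bounded number of steps. Your argument, as written, never invokes any of these constraints and so cannot produce an $\e$-independent $C$. (One \emph{can} repair your global approach by observing that the cross-sum telescopes exactly to $\frac{r_0}{\rho_\e(r_0)}\nabla\AA^n(s_0)$ and then using the Cauchy--Schwarz bound $\|\nabla\AA^n(s)\|=\sup_{e\perp s}\frac{|\langle A^ne,A^ns\rangle|}{\|A^ns\|}\le\|A^n\|\le c_1\nu^n$ together with $r_0/\rho_\e(r_0)\le 1$; this gives $\|\beta_n\|\le c_1\nu^n\le c_1(\nu/\tau)^n$ uniformly. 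But that sharper gradient bound is nowhere in your proposal, and the geometric-sum estimate you describe does not yield it.)
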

For the proof of the lemma recall that $(\DD^k, g_\e)$ is partitioned into three subdomains \footnote{We will continue using subscript decorations to represent subdomains with various restrictions on the radial coordinate.}

\begin{equation*}
\begin{split}
&\DD^k_{>\e}=\{(t,s)\in\DD^k: t>\e\},\\
&\DD^k_{[\e/2,\e]}=\{(t,s)\in\DD^k: t\in[\e/2,\e]\},\\
&\DD^k_{<\e/2}=\{(t,s)\in\DD^k: t<\e/2\}.
\end{split}
\end{equation*}
where the first one is flat, the second one is a ``transition" domain, and the last one is metrically a product. Because $A$ is hyperbolic, any finite orbit $\{x, \tilde Ax, \tilde A^2x,\ldots \tilde A^{n}x\}\subset\DD^k$ can be split into five segments (some of which could be empty)
\begin{equation}
\label{eq_orbit}
\{x, \tilde Ax, \tilde A^2x,\ldots \tilde A^{n}x\}=O_1\cup O_2\cup O_3\cup O_4\cup O_5,
\end{equation}
where $O_1\cup O_5\subset \DD^k_{>\e}$, $O_2\cup O_4\subset \DD^k_{[\e/2,\e]}=\{(t,s)\in\DD^k: t\in[\e/2,\e]\}$ and $O_3\subset \DD^k_{<\e/2}=\{(t,s)\in\DD^k: t<\e/2\}$. Using this partition we will reduce the proof of Lemma~\ref{lemma_estimate} to the following special cases.
\begin{lemma} 
\label{lemma_estimate2}
Lemma~\ref{lemma_estimate} holds true if one additionally assumes that $\{x, \tilde Ax, \tilde A^2x,\ldots \tilde A^{n}x\}\subset \DD^k_{>\e}$. In fact, a better estimate holds
$$
C^{-1}\tau^n\|v\|_\e\le\|D\tilde A^nv\|_\e\le C \nu^n\|v\|_\e
$$
\end{lemma}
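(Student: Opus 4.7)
The plan is to reduce the desired estimate to the standard Euclidean bound for the hyperbolic linear map $A\colon\R^k\to\R^k$. The crucial observation is already built into the definition of $g_\e$: on the region $\tilde\D^k_{>\e}$ the warped metric takes the form $dt^2+t^2ds^2$, which in polar coordinates on $\D^k$ is the canonical Euclidean metric $can$. Therefore the blow-down map $\pi\colon (\tilde\D^k_{>\e}, g_\e)\to(\D^k_{>\e}, can)$ is an isometry, and away from the exceptional set it conjugates $\tilde A$ to $A$.

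So first I would push the orbit and the tangent vector down to $\D^k$. Setting $w=d\pi(v)$ we have $\|v\|_\e=\|w\|_{can}$ by the isometry property, and since $\pi$ intertwines iterates of $\tilde A$ and $A$ throughout $\tilde\D^k_{>\e}$, the vector $D\tilde A^n v$ is pushed down to $A^n w$ (using $DA^n=A^n$ because $A$ is linear). The estimate to prove therefore reduces to the purely linear-algebraic statement
\[
C^{-1}\tau^n\|w\|_{can}\le\|A^n w\|_{can}\le C\nu^n\|w\|_{can},\qquad n\ge 0,\;w\in\R^k,
\]
with a constant $C$ that depends only on $A$ — and, crucially, \emph{not} on $\e$, since the isometry identification absorbs all $\e$-dependence.

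Next I would establish this linear bound by an elementary spectral/Jordan-form argument. By the definitions
\[
\nu=\max\{|\lambda|:\lambda\in\mathrm{spec}(A)\}+\xi,\qquad \tau=\min\{|\lambda|:\lambda\in\mathrm{spec}(A)\}-\xi,
\]
every eigenvalue modulus of $A$ lies strictly in the open interval $(\tau,\nu)$, so the gap $\xi>0$ provides room to absorb the polynomial-in-$n$ factors that arise from non-diagonal Jordan blocks of $A$ and $A^{-1}$. Equivalently, applying Gelfand's spectral radius formula to $A$ and $A^{-1}$ yields a norm on $\R^k$ in which $\|A^n\|\le\nu^n$ and $\|A^{-n}\|\le\tau^{-n}$ for all $n\ge0$, and passing back to $\|\cdot\|_{can}$ costs only a multiplicative constant depending on the basis-change matrix for $A$.

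There is no real obstacle in this special case — the content is essentially the observation that the blow-up is an isometry outside the collar $t\le\e$ together with a textbook linear estimate. The real work of Lemma~\ref{lemma_estimate} will lie in the segments $O_2,O_3,O_4$ (the collar and the collapsed region), which is presumably why the author isolates this clean estimate first; here it suffices to note that the constant $C$ we extract is manifestly $\e$-independent, which is precisely what will allow the later orbit-partition argument to go through uniformly in $\e$.
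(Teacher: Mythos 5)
Your argument is correct and matches the paper's reasoning exactly: the paper dismisses this lemma in one sentence as following from "basic linear algebra and the fact that the metric $g_\e$ on $\DD^k_{>\e}$ is the standard Euclidean metric," which is precisely the isometry-reduction plus spectral estimate you spell out. The only thing you add is the explicit Jordan-form/Gelfand justification of the $\e$-independent constant $C$, which the paper leaves implicit.
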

This statement easily follows from basic linear algebra and the fact that the metric $g_\e$ on $\DD^k_{>\e}$ is the standard Euclidean metric.
\begin{lemma}
\label{lemma_core}
 Lemma~\ref{lemma_estimate} holds true if one additionally assumes that $\{x, \tilde Ax, \tilde A^2x,\ldots \tilde A^{n}x\}\subset \DD^k_{<\e/2}$. 
\end{lemma}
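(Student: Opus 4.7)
The plan is to work in warped-product coordinates $(t,s)\in[0,\e/2)\times \R P^{k-1}$ on $\DD^k_{<\e/2}$, where $g_\e$ simplifies to the product $dt^2+\e^2\,ds^2$ and $\tilde A$ takes the explicit skew-product form
$$
\tilde A(t,s)=\bigl(t\,\phi(s),\,\psi(s)\bigr),\qquad \phi(s):=|As|,\ \ \psi:=P(A).
$$
The geometric feature of this region is the pointwise bound $t/\e<1/2$, and it is this inequality that will absorb the shear coupling between the radial and angular directions and deliver the sharp rate $(\nu/\tau)^n$ rather than the crude $\nu^n$.

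In the $g_\e$-orthonormal frame $\{\partial_t\}\cup\{e_i/\e\}$, with $\{e_i\}$ orthonormal for $ds^2$, the derivative has block upper-triangular form
$$
D\tilde A=\begin{pmatrix}\phi(s) & (t/\e)\,\nabla\phi(s)\\ 0 & [D\psi(s)]\end{pmatrix}.
$$
Iterating along the orbit and using the telescoping identity $\prod_{i>j}\phi(s_i)\cdot t_j=t_n/\phi(s_j)$, the $n$-fold product becomes
$$
D\tilde A^n=\begin{pmatrix}|A^n s_0| & \tfrac{t_n}{\e}\sum_{j<n}\nabla\log\phi(s_j)\,[D\psi^j(s_0)]\\ 0 & [D\psi^n(s_0)]\end{pmatrix}.
$$

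Each block is then controlled by standard estimates. Differentiating $\psi$ gives $D\psi(w)=|As|^{-1}\Pi_{(As)^\perp}Aw$, which combined with $\|A^{\pm n}\|\le C\nu^{\pm n}$ yields $C_1^{-1}(\tau/\nu)^n|w|\le\|D\psi^n(w)\|\le C_1(\nu/\tau)^n|w|$; the diagonal $|A^n s_0|$ sits in $[C^{-1}\tau^n,C\nu^n]\subset[C^{-1}(\tau/\nu)^n,C(\nu/\tau)^n]$ (the second inclusion using the saddle regime $\tau\le 1\le\nu$ implicit in the setup); and the shear block is controlled by $(t_n/\e)$ times a geometric series in $\nu/\tau>1$, giving $\|M_n^{12}\|\le C''(\nu/\tau)^n$. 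Writing an arbitrary $v=a\,\partial_t+W$ with $|a|,\,\e|W|\le\|v\|_\e$ and combining these three bounds yields the upper estimate $\|D\tilde A^n v\|_\e\le C(\nu/\tau)^n\|v\|_\e$; the lower bound follows symmetrically by applying the same analysis to $\tilde A^{-1}$.

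The main obstacle I anticipate is the shear block $M_n^{12}$: a naive product estimate inherits the extra factor $\prod\phi(s_i)$, which is only bounded by $\nu^n$, whereas the telescoping cancellation trades that product for the clean $t_n/\e\le 1/2$ times a convergent geometric series in $\nu/\tau$. Exhibiting this cancellation is the only place where the region-specific hypothesis $t<\e/2$ plays a structural rather than cosmetic role, and it is precisely why the proof partitions the blown-up disc at the threshold $\e/2$ in the first place.
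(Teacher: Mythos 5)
Your proposal is correct and essentially mirrors the paper's proof. Both write $\tilde A$ on the cylinder $\S^{k-1}\times[0,\e/2)$ in the same skew-product form, invoke the same projectivization bound (Claim~\ref{claim_proj}) for the angular block, and exploit the orbit constraint $t\cdot\AA^n(s)=t_n<\e/2$ to tame the shear term; the paper packages this as a Leibniz-rule bound on $t\,\nabla\AA^n(s)$, whereas you expose the same cancellation via an explicit telescoping identity, but the underlying computation is identical.
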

We will prove the above lemma later. Now we proceed with the proof of Lemma~\ref{lemma_estimate} assuming the Lemma~\ref{lemma_core}.
\begin{proof}[Proof of Lemma~\ref{lemma_estimate}]
Denote by $\|\cdot\|$ the flat metric on $\DD^k\backslash \R P^{k-1}$, that is, the pullback $\pi^*(can)$ from $(\D^k\backslash \{0\}, can)$ . Let $y\in \DD^k_{[\e/2,\e]}$ and $v\in T_y \DD^k$. Then bounds~(\ref{eq_rho_condition}) imply that
$$
1\le \frac{\|v\|_\e}{\|v\|}\le 2.
$$
Similar bound holds on a larger domain. Namely, for $y\in \DD^k_{[\e/2,\e]}\cup \tilde A(\DD^k_{[\e/2,\e]})\cup\tilde A^{-1}(\DD^k_{[\e/2,\e]})$ and $v\in T_y \DD^k$
\begin{equation}
\label{eq_bound1}
1\le \frac{\|v\|_\e}{\|v\|}\le K,
\end{equation}
where $K$ depends on $A$ but is independent of $\e$. Indeed, this is easy to see from the fact that $\big(\DD^k_{[\e/2,\e]}\cup \tilde A(\DD^k_{[\e/2,\e]})\cup\tilde A^{-1}(\DD^k_{[\e/2,\e]})\big)\cap \tilde \D^k_{t<c\e}=\varnothing$, for some $c=c(A)<1/2$.

Now we can obtain estimates for the differential $D\tilde A$ as follows. Let $y\in \DD^k_{[\e/2,\e]}\cup\tilde A^{-1}(\DD^k_{[\e/2,\e]})$ and $v\in T_y \DD^k$. Then, using~(\ref{eq_bound1}) and the obvious estimate 
$$
|A^{-1}|^{-1}\le\frac{\|D\tilde Av\|}{\|v\|}\le|A|,
$$
we have
\begin{equation}
\label{eq_bound2}
\begin{split}
& \frac{\|D\tilde Av\|_\e}{\|v\|_\e}=\frac{\|D\tilde Av\|_\e}{\|D\tilde A v\|}\cdot\frac{\|D\tilde Av\|}{\|v\|}\cdot\frac{\|v\|}{\|v\|_\e}\le K|A|   \\
& \frac{\|D\tilde Av\|_\e}{\|v\|_\e}=\frac{\|D\tilde Av\|_\e}{\|D\tilde A v\|}\cdot\frac{\|D\tilde Av\|}{\|v\|}\cdot\frac{\|v\|}{\|v\|_\e}\ge K^{-1}|A^{-1}|^{-1}
\end{split}
\end{equation}
Recall that the finite orbit is decomposed into five segments~(\ref{eq_orbit}). It a standard fact, which follows from dynamics of hyperbolic saddle, that the lengths of $O_2$ and $O_4$ are uniformly bounded by an integer which depends on $A$. Because $A$ commutes with scaling this integer is, in fact, independent of $\e$. 

We can decompose $\|D\tilde A^n v\|/\|v\|$ into the product of five norm ratios according to the splitting~(\ref{eq_orbit}) and notice that the terms which correspond to $O_1$, $O_3$ and $O_5$ are taken care of by Lemmas~\ref{lemma_estimate2} and~\ref{lemma_core}. The terms corresponding to $O_2$ and $O_4$ are uniformly bounded by a constant which is independent of $\e$ because the lengths of these orbit segments are uniformly bounded and uniform estimates~(\ref{eq_bound2}) hold for these orbits segment. Also notice that the transition ratios $\|D\tilde A v\|/\|v\|$, $v\in T_y\tilde \D^k$, when $y\in O_1$ and $f(y)\in O_2$ or $y\in O_2$ and $f(y)\in O_3$ etc., are also taken care of by~(\ref{eq_bound2}). By putting these estimates together we obtain the posited estimate of Lemma~\ref{lemma_estimate} with a constant $C>0$ which is independent of $\e$.
\end{proof}
\begin{proof}[Proof of Lemma~\ref{lemma_core}]
Recall that $(\DD^k_{<\e/2}, g_\e)$ is isometric to $(\S^{k-1}\times[0,\e/2))/\!\!\!\sim\,, \e^2ds^2+dt^2)$. For the purpose of estimating the expansion rate of $\tilde A\colon \DD^k_{<\e/2}\to \DD^k_{<\e/2}$ the identification $\sim$ makes no difference. Hence we can consider the induced map on $(\S^{k-1}\times[0,\e/2), \e^2ds^2+dt^2)$ instead, which we still denote by $\tilde A$. Also note that $(\S^{k-1}\times[0,\e/2), \e^2ds^2+dt^2)$ isometrically embeds into $(\S^{k-1}\times[0,\infty), \e^2ds^2+dt^2)$ and it would be more convenient notation-wise to consider $\tilde A\colon  (\S^{k-1}\times[0,\infty), \e^2ds^2+dt^2)\to (\S^{k-1}\times[0,\infty), \e^2ds^2+dt^2)$. Because $A\colon\R^k\to\R^k$ maps rays to rays, diffeomorphism $\tilde A$ has the skew product form
$$
\tilde A(s,t)=(\hat As, a(s)t),
$$
where $\hat A\colon \S^{k-1}\to\S^{k-1}$ is the projectivization of $A$ and $a\colon\S^{k-1}\to\R^+$ is given by 
$$
a(s)=\frac{\|Av\|}{\|v\|}, v=(s,1).
$$
\begin{claim}
\label{claim_proj}
For any $x\in\S^{k-1}$ and any $v\in T_x\S^{k-1}$ the following estimate holds
$$
C^{-1}(\tau/\nu)^n\|v\|\le\|D\hat A^nv\|\le C (\nu/\tau)^n\|v\|,
$$
where $\|\cdot\|^2=ds^2$.
\end{claim}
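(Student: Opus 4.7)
The plan is to reduce the claim to standard spectral estimates on iterates of $A$ by first expressing $D\hat A^n$ geometrically. I would compute the derivative of $\hat A^n(s) = A^n s/\|A^n s\|$ by differentiating the curve $t \mapsto (x+tv)/\|x+tv\|$ at $t = 0$, using that $v \in x^\perp \subset \R^k$. This yields the explicit formula
$$D\hat A^n(v) = \frac{1}{\|A^n x\|}\left(A^n v - \frac{\langle A^n x, A^n v\rangle}{\|A^n x\|^2} A^n x\right),$$
so $D\hat A^n(v)$ is simply the orthogonal projection of $A^n v$ onto $(A^n x)^\perp$, scaled by $1/\|A^n x\|$. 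In particular the naive bound $\|D\hat A^n v\| \le \|A^n v\|/\|A^n x\|$ holds.

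For the upper bound I would combine this with the standard Jordan-normal-form fact that for any hyperbolic $A \in \GL(k,\R)$ and any slack $\xi > 0$ there is a constant $C_0 = C_0(A, \xi)$ with
$$C_0^{-1}\tau^n \|v\| \le \|A^n v\| \le C_0 \nu^n \|v\|$$
for all $v \in \R^k$ and $n \ge 0$, where $\tau, \nu$ are as defined in the paragraph preceding Lemma~\ref{lemma_estimate} (the slack $\xi$ absorbs polynomial factors coming from non-trivial Jordan blocks). Since $\|x\| = 1$, this gives $\|D\hat A^n v\| \le C_0^2 (\nu/\tau)^n \|v\|$, which is the upper half of the claim.

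For the lower bound, the clean trick is to observe that $\hat A^{-1}$ coincides with the projectivization $\widehat{A^{-1}}$, so the entire upper-bound argument applies verbatim to $\hat A^{-1}$. Writing $\tau_0 = \min|\mathrm{spec}(A)|$ and $\nu_0 = \max|\mathrm{spec}(A)|$, a short calculation shows that by picking the slack $\xi'$ for $A^{-1}$ small enough one has $(1/\tau_0 + \xi')/(1/\nu_0 - \xi') \le \nu/\tau$, which produces a constant $C_1$ with $\|D\hat A^{-n}(w)\| \le C_1 (\nu/\tau)^n \|w\|$ for every tangent vector $w$ on $\S^{k-1}$. Setting $w = D\hat A^n(v)$ and using the trivial identity $D\hat A^{-n}(w) = v$ then yields $\|D\hat A^n v\| \ge C_1^{-1}(\tau/\nu)^n \|v\|$, and taking $C = \max(C_0^2, C_1)$ finishes the proof.

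The only mildly delicate step is the last one: verifying that $\xi'$ can indeed be chosen so the $A^{-1}$-ratio is bounded by $\nu/\tau$. This is elementary, because both ratios are continuous in their slack parameters and both approach $\nu_0/\tau_0$ as the slack tends to zero. Apart from that, the argument is just the explicit derivative formula on the sphere combined with textbook linear algebra.
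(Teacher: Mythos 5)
Your proof is correct and takes essentially the same route as the paper. The paper's argument also factors $D\hat A^n$ as ``apply $A^n$, rescale by $1/\|A^n s\|$, orthogonally project onto $T_{\hat A^n s}\S^{k-1}$'' and then invokes the same spectral bound on $\|A^n v\|$; your explicit formula for the derivative is just the unrolled version of that composition. The one place you go further than the paper is the lower bound: the paper dismisses it as ``analogous,'' while you correctly supply the actual mechanism (invert, use $\hat A^{-1}=\widehat{A^{-1}}$, check that the slack $\xi'$ for $A^{-1}$ can be taken small enough that $(1/\tau_0+\xi')/(1/\nu_0-\xi')\le\nu/\tau$, then apply the upper bound to $w=D\hat A^n v$). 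That detail is worth having, since the projection step has norm $\le 1$ but not $\ge 1$, so the lower bound genuinely is not a sign-flip of the upper-bound computation.
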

Note that this claim is a particular case of Lemma~\ref{lemma_core} for vectors tangent to $\S^{k-1}\times\{0\}\subset \S^{k-1}\times[0,\e)$.

We proceed with the prove of Lemma~\ref{lemma_core} assuming Claim~\ref{claim_proj}. Let
$$
\AA^n(s)=a(s)a(\hat As)a(\hat A^2s)\ldots a(\hat A^{n-1}s).
$$
From definition of $\tau$ and $\nu$ we have that there exists $c_1>0$ such that
$$
\forall n>0,\,\,\,\,c_1^{-1}\tau^n<\frac{\|A^nv\|}{\|v\|}<c_1\nu^n
$$
 which implies
\begin{equation}
\label{eq_AA}
\forall n>0,\,\,\,\,\,c_1^{-1}\tau^n<\AA^n(s)<c_1\nu^n
\end{equation}

Now let $\{(s,t), \tilde A(s,t),\ldots, \tilde A^n(s,t)\}\subset \tilde \D^k_{<\e/2}$ be a finite orbit. Note that 
\begin{equation}
\label{eq_An}
\tilde A^n(s,t)=(\hat A^ns,\AA^n(s)t)
\end{equation}
and, hence, the second coordinate must be less than $\e/2$:
\begin{equation}
\label{eq_eps}
\AA^n(s)t<\e/2.
\end{equation}
By differentiating~(\ref{eq_An}) we obtain the lower diagonal form for the differential
$$
D_{(s,t)}\tilde A^n=
\begin{pmatrix}
D_s\hat A^n & 0\\
t\nabla\AA^n(s) & \AA^n(s)
\end{pmatrix}
$$
We already have estimates on the diagonal entries, but we also need to control the gradient of $\AA^n(s)$. Recall that $\|\cdot\|^2=ds^2$. By taking the gradient of the product we have
\begin{multline*}
\|\nabla\AA^n(s)\|=\left\|\sum_{i=0}^{n-1}\frac{\AA^n(s)}{a(\hat A^i(s))}\nabla(a\circ\hat A^i)(s))\right\|
\\
\le c_2\AA^n(s)\sum_{i=0}^{n-1}\|\nabla(a\circ\hat A^i)(s))\|
\le c_2\AA^n(s)\sum_{i=0}^{n-1}|D\hat A^i_s|\cdot\|\nabla a( \hat A^is)\|\\
\le c_3\AA^n(s)\sum_{i=0}^{n-1}|D\hat A^i_s|
\le c_4 \AA^n(s)\sum_{i=0}^{n-1}\left(\frac\nu\tau\right)^i=c_5\AA^n(s)\left(\frac\nu\tau\right)^n,
\end{multline*}
where for the first inequality we have used the fact that $a$ is uniformly bounded from below, for the third inequality we have used the fact that $\|\nabla a\|$ is bounded and for the forth inequality we have invoked Claim~\ref{claim_proj}.

Let $v=(v_s,v_t)\in T_{(s,t)}\tilde\D^k_{<\e/2}$, where $v_s\in T_s\S^{k-1}$, $v_t\in T_t[0,\e/2)\simeq\R$. Using Claim~\ref{claim_proj}, the bound~(\ref{eq_AA}), the above bound on the gradient and the obvious inequalities $|v_t|\le \|v\|_\e$, $\e\|v_s\|\le\|v\|_\e$, we obtain
\begin{multline*}
\|D_{s,t}\tilde A^nv\|^2_\e=\e^2\|D_s\hat A^nv_s\|^2+|\AA^n(s)v_t+t<\nabla\AA^n(s),v_s>|^2\\
\le\e^2C^2\left(\frac\nu\tau\right)^{2n}\|v_s\|^2+|c_1\nu^n|v_t|+t\|\nabla\AA^n(s)\|\|v_s\||^2\\
\le C^2\left(\frac\nu\tau\right)^{2n}\|v\|_\e^2+|c_1\nu^n\|v\|_\e+tc_5\AA^n(s)\left(\frac\nu\tau\right)^{n}\|v_s\||^2\\
\le C^2\left(\frac\nu\tau\right)^{2n}\|v\|_\e^2+|c_1\nu^n\|v\|_\e+c_5\frac\e2\left(\frac\nu\tau\right)^{n}\|v_s\||^2\\
\le C^2\left(\frac\nu\tau\right)^{2n}\|v\|_\e^2+|c_1\nu^n\|v\|_\e+c_5\left(\frac\nu\tau\right)^{n}\|v\|_\e|^2
\le c_6\left(\frac\nu\tau\right)^{2n}\|v\|_\e^2.
\end{multline*}
Hence we have established the posited upper bound. The proof of the lower bound takes the same route by rewriting the lower bound as an upper bound on the differential of $\tilde A^{-1}$ and using the same steps. (Note that the main auxiliary bounds~(\ref{eq_AA}) and the bounds in Claim~\ref{claim_proj} are symmetric.) Hence  the proof of Lemma~\ref{lemma_core} is complete modulo Claim~\ref{claim_proj}.
\end{proof}

\begin{proof}[Proof of Claim~\ref{claim_proj}] 
This claim is well-known and easy, however, we couldn't locate a reference in the literature. 

Realize $(\S^{k-1}, \|\cdot\|)$ as the unit sphere in $(\R^k, can)$. Then given $v\in T_s\S^{k-1}$ we can decompose $ v\mapsto D\hat A^nv$ as the following composition
$$
(s,v)\mapsto (A^ns,DA^nv)\mapsto \left(\frac{A^ns}{\|A^ns\|},\frac{DA^nv}{\|A^ns\|}\right)\mapsto (\hat A^ns, D\hat A^nv),
$$
where the first map is self-explanatory, the second is a homothety and the third one is just the projection on the tangent space $T_{\hat A^ns}\S^{k-1}$ (and hence has norm $\le 1$). Hence we have
$$
\|D\hat A^n v\|\le\frac{\|DA^n v\|}{\|A^ns\|}\le C\left(\frac\nu\tau\right)^n.
$$
The proof of the lower bound is analogous.
\end{proof}

\subsection{The proof of partial hyperbolicity}
\subsubsection{The scheme}
The strategy of the proof is fairly straightforward. The stable, the unstable and the center distributions for $\hat f$ --- $\hat E^s$, $\hat E^u$ and $\hat E^c$ ---  away from the exceptional set are pull-backs by the blow-down map $\pi\colon \hat M\to M$ and extend continuously to the exceptional set. It is crucial to consider special Riemannian metrics $\hat g_\e$ on $\hat M$ so that $(\tilde \D^k\times N, g_\e)\subset (\hat M, \hat g_\e)$, $\e\in(0,\e_0)$, are isometric embeddings. The exponential estimates for the action of $D\hat f$ along $\hat E^s$ and $\hat E^u$ are easy and the main difficulty is to control $D\hat f|_{\hat E^c}$ in the neighborhood of the exceptional set $\R P^{k-1}\times N\subset \hat M$. Because the center distribution is close to the ``horizontal" distribution near $\R P^{k-1}\times N$, Lemma~\ref{lemma_core} provides control on $D\hat f|_{\hat E^c}$ in the neighborhood of $\R P^{k-1}\times N$. However, an orbit can return to this neighborhood infinitely often and, hence, the constant $C>0$ of Lemma~\ref{lemma_core} could contribute to the exponential rate. This problem is addressed by letting $\e\to 0$. For smaller $\e$ the orbit would spend larger time outside of the neighborhood of $\R P^{k-1}\times N$ where metric was altered. This implies that the exponential contribution of $C>0$ can be made arbitrarily close to 1 which yields partial hyperbolicity.
\subsubsection{Riemannian metrics and partial hyperbolicity} Recall that we have smoothly identified a neighborhood of $N$  with $\D^k\times N$. Let us equip $M$ with a Riemannian metric $g$ such that the restriction of $g$ to $\D^k\times N$ is the direct sum $g=can+g_N$, where $g_N$ is a Riemannian metric on $N$. Recall that $f\colon M\to M$ is partially hyperbolic and inequalities~(\ref{def_ph}) hold with respect to {\em some} Riemannian metric. For the newly chosen metric $g$ inequalities~(\ref{def_ph}) do not necessarily hold, however $\exists K>0$ and $\exists \delta>0$ such that $\forall n>0$
\begin{equation}
\label{def_ph2}
\begin{split}
&\|Df^n(v^s)\|_g\le K(\lambda-\delta)^n\;\;\;\\
K^{-1}(\lambda+\delta)^n\le& \|Df^n(v^c)\|_g\le K(\mu-\delta)^n  \\
K^{-1}(\mu+\delta)^n\le &\|Df^n(v^u)\|_g
\end{split}
\end{equation}
for all unit vectors $v^s\in E^s$, $v^c\in E^c$ and $v^u\in E^u$.
Note that existence of positive $\delta$ comes from strict inequalities~(\ref{def_ph}) and compactness of $M$.

Now for each $\e\in(0,\e_0)$ equip $\hat M$ with the Riemannian metric $\hat g_\e$ which coincides with $g_\e+g_N$ on $\tilde \D^k\times N$ and with $g$ elsewhere. Note that the blow-down map $\pi\colon (\hat M,\hat g_\e) \to (M, g)$ is an isometry on the complement of $\tilde \D^k_{<\e}\times N$. Denote $\|\cdot\|^2_\e=\hat g_\e(\cdot,\cdot)$. To establish partial hyperbolicity of $\hat f\colon\hat M\to\hat M$ we will show that there exists $\delta>0$ and a $D\hat f$-invariant splitting $T\hat M=\hat E^s\oplus\hat E^c\oplus\hat E^u$, an $\e>0$ and $\hat C>0$ such that  $\forall n>0$
\begin{equation}
\label{eq_final_estimate}
\begin{split}
&\|D\hat f^n(v^s)\|_\e\le \hat C(\lambda-\delta)^n\;\;\;\\
\hat C^{-1}(\lambda+\delta)^n\le& \|D\hat f^n(v^c)\|_\e\le \hat C(\mu-\delta)^n  \\
\hat C^{-1}(\mu+\delta)^n\le &\|D\hat f^n(v^u)\|_\e
\end{split}
\end{equation}
for unit vectors $v^s\in \hat E^s$, $v^c\in \hat E^c$ and $v^u\in \hat E^u$.

\subsubsection{The stable and unstable distributions}

The restriction $\pi\colon\hat M\backslash \R P^{k-1}\times~N\to M\backslash N$ of the blow-down map is a diffeomorphism. Hence away from the exceptional set we can pull back the the stable and unstable distributions
$$
\hat E^s|_{\hat M\backslash \R P^{k-1}\times N}\stackrel{\mathrm{def}}{=} D\pi^{-1} E^s|_{M\backslash N},\,\,\,\,
\hat E^u|_{\hat M\backslash \R P^{k-1}\times N}\stackrel{\mathrm{def}}{=} D\pi^{-1} E^u|_{M\backslash N},
$$ 
Recall that by the locally fiberwise assumption~(\ref{eq_fiberwise2}) distributions $E^s$ and $E^u$ are tangent to the $N$-fibers in the neighborhood $\D^k\times N\subset M$. It follows that $\hat E^s$ and $\hat E^u$ are also tangent the $N$-fibers in the the neighborhood $\tilde\D^k\times N\subset \hat M$. Therefore distributions $\hat E^s$ and $\hat E^u$ extend continuously to the exceptional set $\R P^{k-1}\times N$.

Notice that, by definition of $\hat g_\e$, if $v\in\hat E^s\oplus \hat E^u(x)$, then $\|v\|_\e=\sqrt{g_N(v,v)}$. It immediately follows that ~(\ref{def_ph2}) implies that $\forall n>0$
\begin{equation*}
\begin{split}
&\|D\hat f^n(v^s)\|_\e\le K(\lambda-\delta)^n\;\;\;\\
K^{-1}(\mu+\delta)^n\le &\|D\hat f^n(v^u)\|_\e
\end{split}
\end{equation*}
for all unit vectors $v^s\in \hat E^s$, and $v^u\in \hat E^u$. Hence it remains to establish the middle inequality of~(\ref{eq_final_estimate}).

\subsubsection{The center distribution}
Let $H$ be the ``horizontal" distribution tangent to the $\D^k$-fibers in the neighborhood $\D^k\times N\subset M$ and let $\hat H$ be the ``horizontal" distribution tangent to the $\tilde \D^k$-fibers in the neighborhood $\tilde \D^k\times N\subset \hat M$. By Remark~\ref{rem_center}, $E^c|_N=H|_N$. 

As before, away from the exceptional set define
$$
\hat E^c|_{\hat M\backslash \R P^{k-1}\times N}\stackrel{\mathrm{def}}{=} D\pi^{-1} E^c|_{M\backslash N}.
$$
Because the angle $\angle_g(E^c(x), H(x))\to 0$ as $x$ approaches the exceptional set $N$, we also have that $\angle_{\hat g_\e}(\hat E^c(x), \hat H(x))\to 0$ as $x$ approaches the exceptional set $\R P^{k-1}\times N$. Hence, $\hat E^c$ extends continuously to the exceptional set and
\begin{equation*}
\label{eq_EH}
\hat E^c|_{\R P^{k-1}\times N}=\hat H|_{\R P^{k-1}\times N}.
\end{equation*}

\subsubsection{The local center estimate}
\label{sec_center}
Lemma~\ref{lemma_estimate} provides exponential estimates for the action of $D\hat f$ on $\hat H$. Namely, given a finite orbit $\{x, \hat f x,\ldots \hat f^nx\}\subset \tilde \D^k\times N$ and a vector $v^h\in \hat H(x)$, Lemma~\ref{lemma_estimate} gives
\begin{equation}
\label{eq_h}
C^{-1}(\tau/\nu)^n\|v^h\|_\e\le\|D\hat f^nv^h\|_\e\le C (\nu/\tau)^n\|v^h\|_\e
\end{equation}
 Inequalities~(\ref{eq_nu_tau}) imply that there exists a $\delta_1>0$ such that
$$
\lambda+\delta_1<\frac\tau\nu<\frac\nu\tau<\mu-\delta_1.
$$
Hence~(\ref{eq_h}) implies
\begin{equation}
\label{eq_h2}
C^{-1}(\lambda+\delta_1)^n\|v^h\|_\e\le\|D\hat f^nv^h\|_\e\le C (\mu-\delta_1)^n\|v^h\|_\e
\end{equation}

The goal now is to obtain same estimates for $v^c\in\hat E^c$ near the exceptional set, where $\hat E^c$ is close to $\hat H$.

Pick a small $\omega>0$ and let $x\in(\tilde \D^k_{<\omega}\times N)\backslash(\R P^{k-1}\times N)$. Pick a $v^c\in \hat E^c(x)$ and decompose $v^c=v^h+v^v$, where $v^h\in\hat H(x)$ and $v^v$ is the ``vertical vector" tangent to the $N$-fiber through $x$. We can pull back this splitting of $v^c$ to $T_{\pi^{-1}x}(\D^k\times N)$ using $\pi$ to the splitting $\bar v^c=\bar v^h+\bar v^v$. Then we have
$$
\frac{\|v^v\|_\e}{\|v^h\|_\e}\le\frac{\|\bar v^v\|}{\|\bar v^h\|}\le c_1\omega^\alpha,\,\,\, \alpha\in (0,1),
$$
where the first inequality is by the definition of the metric $g_\e$ and the second one is by H\"older continuity of $E^c$ at $N$ (in fact, any uniform modulus of continuity would be sufficient for further purpose). This estimate on the ratio of ``vertical" and ``horizontal" components of $v^c$ makes it possible to compare the expansion of $v^c$ to that of $v^h$ as follows
\begin{multline}
\label{eq_rates}
\frac{\|D\hat fv^c\|^2_\e}{\|v^c\|^2_\e}\le \frac{\|D\hat fv^h\|^2_\e+\|D\hat fv^v\|^2_\e}{\|v^h\|^2_\e}
\le \frac{\|D\hat fv^h\|^2_\e}{\|v^h\|^2_\e}+c_2 \frac{\|v^v\|^2_\e}{\|v^h\|^2_\e}\\
\le \frac{\|D\hat fv^h\|^2_\e}{\|v^h\|^2_\e}+c_3\omega^{2\alpha}
\le \frac{\|D\hat fv^h\|^2_\e}{\|v^h\|^2_\e}(1+c_4\omega^{2\alpha})
\le \frac{\|D\hat fv^h\|^2_\e}{\|v^h\|^2_\e}(1+\sqrt{c_4}\omega^{\alpha})^2
\end{multline}
The constant $c_2$ is the bound on the expansion of ``vertical" vectors which independent of $\e$ because $\|\cdot\|_\e$ does not depend on $\e$ for ``vertical" vectors. The constant $c_3/c_4$ is the upper bound on the expansion of ``horizontal" vectors, which is independent of $\e$ by~(\ref{eq_bound2}).

Estimate~(\ref{eq_rates}) implies that, provided the orbit stays in $\tilde \D^k_{<\omega}\times N$, we can replace $v^h$ by $v^c$ in~(\ref{eq_h2}) after adjusting the upper bound by a small exponential term
\begin{equation*}
\|D\hat f^nv^c\|_\e\le C (\mu-\delta_1)^n(1+\sqrt{c_4}\omega^{\alpha})^n\|v^c\|_\e
\end{equation*}
Analogous lower bound can be established in the similar way. We conclude that there exists an $\omega>0$ and a $\delta_2>0$ such that for any $\e<\omega$ and any finite orbit $\{x, \hat f x,\ldots \hat f^nx\}\subset \tilde \D^k_{<\omega}\times N$ and any vector $v^c\in \hat E(x)$ we have
\begin{equation}
\label{eq_local_center}
C^{-1}(\lambda+\delta_2)^n\|v^c\|_\e\le\|D\hat f^nv^c\|_\e\le C (\mu-\delta_2)^n\|v^c\|_\e
\end{equation}

\subsubsection{The global center estimate}
First note that~(\ref{eq_h2}) takes care of the posited center estimate~(\ref{eq_final_estimate}) in the case when $x\in\R P^{k-1}\times N$ and $v^h\in \hat E^c(x)=\hat H(x)$. Now we will explain how~(\ref{eq_local_center}) implies the posited center estimate on the complement of the exceptional set. 

Recall that there exists $c=c(A)>1$ such that 
\begin{equation}
\label{eq_e_choice}
\hat f(\tilde \D^k_{\e}\times N)\cup \hat f^{-1}(\tilde \D^k_{<\e}\times N)\subset \tilde \D^k_{<c\e}\times N
\end{equation}
We pick $\omega>0$ so that~(\ref{eq_local_center}) holds and then we consider all $\e\in(0,\frac{\omega}{c})$. 
Cover $\hat M$ by two open sets $\U_\e=\tilde \D^k_{<c\e}\times N$ and $\V_\e=\hat M\backslash \tilde \D^k_{<\e}\times N$. 

Now pick any $x\in \hat M$ which is not in the exceptional set and consider a finite orbit segment $\{x, \hat f x,\ldots \hat f^{n-1}x\}$. This orbit can be partitioned into a finite number of (disjoint) segments $O_{y,i}=\{y, \hat f y,\ldots \hat f^{i}y\}$ such that each segment is entirely contained either in $\U_\e$ or in $\V_\e$ (and the orbit segments alternate between $\U_\e$ and $\V_\e$). Moreover, because of~(\ref{eq_e_choice}), this partition can be chosen so that for each segment $O_{y,i}$ the next point in the orbit $\hat f^{i+1}y$ also belongs to the open set ($\U_\e$ or $\V_\e$) containing $O_{y,i}$. 

Now, given a $v^c\in \hat E^c(x)$ we have the decomposition
 \begin{equation}
 \label{eq_decomp}
 \frac{\|D\hat f^nv^c\|_\e}{\|v^c\|_\e}=\prod_{O_{y,i}}\frac{\|D\hat f^{i+1}v_y^c\|_\e}{\|v^c_y\|_\e}
 \end{equation}
where $v^c_y\in \hat E^c(y)$ is the image of $v^c$ under the appropriate iterate. Recall that we have  estimates for each factor in the product. Namely, if $O_{y,i}\cup\hat f^{i+1}y\subset \U_\e\subset \tilde\D^k_{<\omega}$ then
$$
C^{-1}(\lambda+\delta_2)^n\le\frac{\|D\hat f^{i+1}v_y^c\|_\e}{\|v^c_y\|_\e}\le C (\mu-\delta_2)^n
$$
by~(\ref{eq_local_center}). And if $O_{y,i}\cup\hat f^{i+1}y\subset \V_\e$ then from partial hyperbolicity of $f$~(\ref{def_ph2})  and the fact that $\pi^*g=\hat g_\e$ on $\V$ we have
$$
K^{-1}(\lambda+\delta)^n\le\frac{\|D\hat f^{i+1}v_y^c\|_\e}{\|v^c_y\|_\e}\le K (\mu-\delta)^n.
$$

Both of these estimates have constants ($C$ and $K$) which, of course, will contribute exponentially to the product~(\ref{eq_decomp}). However both constants are independent of $\e$. (Recall that $C$ comes from Lemma~\ref{lemma_estimate}.) Now by sending $\e$ to $0$ we shrink the open neighborhood $\U_\e$ of $N$. The decomposition into orbit segments is, of course, changing. And it follows that once an orbit leaves $\U_\e$ it takes a longer time to return to $\U_\e$ again. Hence, by choosing sufficiently small $\e$, the orbit segments $O_{y,i}\subset \V_\e$ can be made arbitrary long. It follows that the contribution of $C$ and $K$ to the product~(\ref{eq_decomp}) can be ``absorbed" by a small adjustment of the exponential rate of the estimate on these longer pieces in $\V_\e$. (This are very standard inequality manipulations and we suppress the details.) We conclude that there exists $\delta_3>0$ and an $\e>0$ such that for all $ v^c\in\hat E^c$
$$
(CK)^{-1}(\lambda+\delta_3)^n\|v^c\|_\e\le\|D\hat f^nv^c\|_\e\le CK (\mu-\delta_3)^n\|v^c\|_\e
$$
\hfill$\square$
\begin{remark} We would like to point out that localization at the exceptional set played a significant role twice. First,  we had chosen a small $\omega$ so that the linear estimate for the center given by Lemma~\ref{lemma_estimate} yields non-linear estimate along $\hat E^c$ near the exceptional set~(\ref{eq_local_center}). Second, we had to shrink the region $\U_\e$ where the metric $\hat g_\e$ differs from $g$ so that partial hyperbolicity away from $\U_\e$ takes care of contributions of the constants $C$ and $K$. Uniform control on $\hat g_\e$ was, of course, crucial for this argument. Namely, the fact that $C$ is independent of $\e$.
\end{remark}

\subsection{Complex blow-up}
\label{sec_complex}
Here we explain how the proof can be adapted to the case of complex blow-up to yield Theorem~\ref{thm_complex}.
\subsubsection{The Fubini-Study metric and a family of Riemannian metrics on the sphere}
Let $\S^{2k-1}\subset \C^k$ be the unit sphere equipped with the standard round metric $ds^2$. The circle $S^1\subset\C$ acts on $\S^{2k-1}$ by scalar multiplication $e^{i\varphi}\cdot(z_1,z_2,\ldots z_k)\mapsto (e^{i\varphi}z_1,e^{i\varphi}z_2,\ldots e^{i\varphi}z_k)$. This action makes $\S^{2k-1}$ into the total space of the (generalized) Hopf fibration
$$
S^1\to\S^{2k-1}\stackrel{H}{\to}\C P^{k-1}.
$$
Moreover, the $S^1$ action is isometric.

Let $X$ be the one-dimensional distribution tangent to the orbits of $S^1$ action and let $X^\perp$ be the orthogonal distribution. We can decompose $ds^2$ accordingly as
$$
ds^2=d\varphi^2+h,
$$
where $d\varphi^2$ is the metric along the $S^1$-fibers and $h$ is the metric on the orthogonal complement. More precisely, if $pr\colon T\S^{2k-1}\to X$ is the orthogonal projection then
$$
d\varphi^2(v_1,v_2)=ds^2(pr(v_1),pr(v_2))
$$
and $h$ is the difference
$$
h(v_1,v_2)=ds^2(v_1-pr(v_1), v_2-pr(v_2)).
$$
The restriction $h|_{X^\perp}$ is a positive definite symmetric bilinear form. Clearly the $S^1$ action preserves $X^\perp$ and $h|_{X^\perp}$. Hence we can define the {\it Fubini-Study} metric $\bar h$ on $\C P^{k-1}$ by pushing forward $h$
$$
\bar h(DH(v_1),DH(v_2))=h(v_1,v_2),\,\,\, v_1,v_2\in X^\perp.
$$
Also for each $\mu\ge 0$ let
$$
h_\mu=\mu^2d\varphi^2+h.
$$
For $\mu>0$ this yields a Riemannian metric and for $\mu=0$ a degenerate metric with circle fibers of zero length. Note that for each $\mu\ge 0$ the $S^1$ action is isometric and $H\colon (\S^{2k-1}, h_\mu)\to (\C P^{k-1},\bar h)$ is a Riemannian submersion. We refer to~\cite[Sections 1.4 and 2.5]{Petersen} for much more detailed discussion and explicit doubly warped expressions for the Fubini-Study metric.

\subsubsection{A family of Riemannian metrics on $\tilde \D^k_\C$} Similarly to the real case we begin with the ``spherical" blow-up
$$
\bar\D^k_\C=\{ (x,r(x)), x\in \D^k_\C,\, x\in r(x)\}.
$$
Under the identification $\bar \D^k_\C\simeq \S^{k-1}\times[0,1)$, the map $\bar\D^k_\C\to\tilde \D^k_\C$ which sends each real ray $r(x)$ to the unique complex line containing it, becomes the quotient map
\begin{equation}
\label{eq_quotient}
\S^{2k-1}\times[0,1)\to \S^{2k-1}\times[0,1)/\!\!\!\sim
\end{equation}
where the relation $\sim$ is given by the Hopf action of $S^1$ on $\S^{2k-1}\times\{0\}$.

We define a family of metrics $g_\e, \e\in(0,\e_0)$, on $\bar \D^k_\C$ in the following doubly warped form
$$
g_\e\stackrel{\mathrm{def}}{=}dt^2+\rho_\e(t)^2h_{\mu_\e}=dt^2+\rho_\e(t)^2h+(\rho_\e(t)\mu_\e(t))^2d\varphi^2=dt^2+\rho_\e(t)^2h+t^2d\varphi^2,
$$
where $\mu_\e(t)=t\rho_\e(t)^{-1}$ and $\rho_\e$ is a smooth function which satisfies
$$
 \rho_\e(t)=
 \begin{cases} \e, \,\,\,t\le\e/2\\
 t, \,\,\,t\ge\e
 \end{cases}
 $$
 and 
 \begin{equation*}
 t\le \rho_\e(t)\le\e,\,\,\,\,\, t\in[\e/2,\e].
 \end{equation*}
We notice that $g_\e|_{\bar \D^k_{>\e, \C}}$ is the standard Euclidean metric because $h_1=ds^2$. For $t>0$ we clearly have a smooth Riemannian metric. However, when $t=0$, the metric becomes degenerate, namely, $g_\e|_{\S^{2k-1}\times\{0\}}=h_0$. Because $h_0$ is $S^1$-invariant, metrics $g_\e$ factor through to a true Riemannian metrics on $\tilde \D^k_\C$ so that the quotient map~(\ref{eq_quotient}) is an isometry. Abusing the notation, we still denote this family of metrics on $\tilde \D^k_\C$ by $g_\e$. One can check that $g_\e$ is indeed a smooth metric at the exceptional locus $\C P^{k-1}\subset \tilde\D^k_\C$ by using the standard smooth charts for the blow-up, such as
\begin{equation}
\label{eq_chart}
(z_1,z_2,\ldots z_k)\mapsto (z_1, z_1z_2, \ldots z_1z_k, [1:z_2:\ldots :z_k]).
\end{equation}

\subsubsection{Local dynamics near the exceptional set} Now we explain that the metrics $g_\e$ possess local product structure on $\tilde \D^k_{<\e/2,\C}$ and that $\tilde A$ behaves like a skew product with respect to this product structure.

The manifold $\tilde \D^k_{<\e/2,\C}\backslash\C P^{k-1}$ is the product $\S^{2k-1}\times (0,\e/2)$. We have the distributions $X$ and $X^\perp$ on each sphere fiber $\S^{2k-1}\times\{t\}$ and we can define the assembled distribution
$$
E=\bigcup_{t\in(0,\e/2)} X^\perp|_{\S^{2k-1}\times\{t\}}.
$$
Also let
$$
F=\frac{\partial}{\partial t}\oplus \bigcup_{t\in(0,\e/2)} X|_{\S^{2k-1}\times\{t\}}.
$$
Since the splitting $T\S^{2k-1}=X\oplus X^\perp$ is an orthogonal splitting with respect to every metric $h_\mu$ and $g_\e$ has warped form we have that the splitting $T(\tilde \D^k_{<\e/2,\C}\backslash\C P^{k-1})=F\oplus E$ is orthogonal with respect to $g_\e$. This splitting smoothly extends to $\C P^{k-1}$ so that
$$
E|_{\C P^{k-1}}=T\C P^{k-1}\subset T\tilde \D^k_{<\e/2,\C}|_{\C P^{k-1}}
$$
This again, can be seen using charts. For example the splitting $E\oplus F([1,0,\ldots 0])$ when expressed in the chart~(\ref{eq_chart}) becomes $T\C^{k-1}\oplus T\C(0,0,\ldots 0)$, where $\C^{k-1}=\{(z_2, z_3,\ldots z_k)\}$ and $\C=\{(z_1,0,\ldots 0)\}$.

Distribution $F$ integrates to one-complex-dimensional disks $\D^1_{<\e/2,\C}$ and the restriction of $g_\e$ to these disks is given by $dt^2+t^2d\varphi^2$. Hence we can view $\tilde \D^k_{<\e/2,\C}$ as a fiber bundle
$$
 \D^1_{<\e/2,\C}\to\tilde \D^k_{<\e/2,\C}\to\C P^{k-1} 
$$
with flat fibers.
Moreover the projection map $(\tilde \D^k_{<\e/2,\C}, g_\e)\to(\C P^{k-1},\bar h)$ is a Riemannian submersion.

Our next observation is that the induced map $\tilde A\colon \tilde \D^k_{<\e/2,\C}\to \tilde \D^k_{<\e/2,\C}$ preserves $F$. Indeed, by linearity, $A$ preserves the real rays (integral lines of $\frac{\partial}{\partial t}$) and since $A$ is complex-linear it preserves $X$. Hence $\tilde A$ fits into the commutative diagram
\begin{equation}
\label{eq_diagram}
\xymatrix{
\tilde \D^k_{<\e/2,\C}\ar[d]\ar^{\tilde A}[r] & \tilde \D^k_{<\e/2,\C}\ar[d]\\
\C P^{k-1}\ar^{\hat A}[r] & \C P^{k-1}
}
\end{equation}
where $\hat A$ is the complex projectivization of $A\colon \C^k\to\C^k$.
Moreover, $\tilde A$ is conformal on the fibers.\footnote{For the real blow-up the situation was similar. We also had a non-trivial interval bundle over $\R P^{k-1}$, but we had the luxury to pass to the double cover which trivialized the bundle and allowed us to work with a true skew product.}

\subsubsection{The estimates}
The proof of partial hyperbolicity of the diffeomorphism $\hat f\colon \hat M_\C\to\hat M_\C$ follows the steps of the proof of the Main Theorem very closely. In particular, the proof of the second part where the local estimate of Lemma~\ref{lemma_estimate} is used to establish partial hyperbolicity goes through without any alternations at all. 

For the proof of the analogue of Lemma~\ref{lemma_estimate} (relative to the family of metric $g_\e$ constructed above) for $\tilde A\colon\tilde \D^k_\C\to\tilde \D^k_\C$ recall~(\ref{eq_decomp}) that we have partitioned the finite orbit into 5 orbit segments according to the distance to the exceptional set. Because $g_\e$ is flat on $\tilde \D^k_{>\e,\C}$ and the transition domain $\tilde \D^k_{[\e/2,\e],\C}$ contains only uniformly bounded number of points from the orbit, the exact same argument which we have used for the proof of Lemma~\ref{lemma_estimate}, works again here. Hence we only need to look at the domain $\tilde \D^k_{<\e/2,\C}$ where the metric $g_\e$ is different in the complex case. Namely, given a finite orbit  $\{x, \tilde Ax, \tilde A^2x,\ldots \tilde A^{n}x\}\subset \DD^k_{<\e/2,\C}$ and any $v\in T_x\tilde \D^k_{<\e/2,\C}$ we need to show that there exists a $C>0$ (which does not depend on $\e$) such that for all $n>0$
$$
C^{-1}(\tau/\nu)^n\|v\|_\e\le\|D\tilde A^nv\|_\e\le C (\nu/\tau)^n\|v\|_\e
$$
The proof of this bound follows the proof of Lemma~\ref{lemma_core} making use of the structure of $g_\e$ on $\tilde \D^k_{<\e/2,\C}$ on which we have elaborated above. Indeed, the bound on projectivization
$$
C^{-1}(\tau/\nu)^n\|v\|_{\bar h}\le\|D\hat A^nv\|_{\bar h}\le C (\nu/\tau)^n\|v\|_{\bar h},
$$
follows from the Claim~\ref{claim_proj} and the fact that $\hat A \colon \C P^k\to\C P^k$ is the quotient of $\hat A\colon \S^{2k-1}\to \S^{2k-1}$ by the Riemannian submersion $(\S^{2k-1}, ds^2)\to (\C P^{k-1},\bar h)$. Further, the function $s\mapsto \|As\|/\|s\|$, $s\in\S^{2k-1}\subset\C^k$, factors through to a function $a\colon\C P^{k-1}\to \R$ which generates a cocycle $\AA^n\colon\C P^{k-1}\to\R$ which is controlled by $\tau^n$ and $\nu^n$~(\ref{eq_AA}).

Finally we make use of the skew product structure~(\ref{eq_diagram}) (just as we did in the real case) to establish the posited estimates. Namely, given a $v\in T_x \tilde \D^k_{<\e/2,\C}$ decompose $v=v_E+v_F$, $v_E\in E(x)$, $v_F\in F(x)$. Then growth of $v_F$ is controlled by the bounds on the cocycle $\AA^n$ and the growth of the $E$-component of $v_E$ is controlled by the bounds on $\hat A^n$. Since $E$ is not $\tilde A$-invariant $v_E$-component also yields some ``shear growth" which can be controlled, just as in the proof of Lemma~\ref{lemma_estimate}, by estimating the gradient $\nabla\AA^n$.

\medskip
\medskip
\medskip
Andrey Gogolev\\
SUNY Binghamton, N.Y., 13902, USA\\


\begin{thebibliography}{texttLLLL}

\bibitem[BP74]{BP} M. Brin, Ja. Pesin, {\it
Partially hyperbolic dynamical systems.} (Russian) 
Izv. Akad. Nauk SSSR Ser. Mat. 38 (1974), 170--212. 

\bibitem[BW08]{BW} Burns K., Wilkinson, A.  {\it Dynamical coherence and center bunching.} Discrete Contin. Dyn. Syst. 22 (2008), no. 1-2, 89--100.

\bibitem[CHHU15]{CHHU} Carrasco P.,  Rodriguez Hertz M.,  Rodriguez Hertz F. and Ures R., {\it Partially hyperbolic dynamics in dimension 3}, arXiv:1501.00932

\bibitem[D32]{D} Denjoy A. {\it Sur les courbes d\'efinies par les \'equations diff\'erentielles \`a la surface du tore. }  Journal de Math\'ematiques Pures et Appliqu\'ees, 11 (1932): 333--375.

\bibitem[FW79]{FW} Franks J., Williams B., {\it Anomalous Anosov flows.} Global theory of dynamical systems (Proc. Internat. Conf., Northwestern Univ., Evanston, Ill., 1979), pp. 158--174, Lecture Notes in Math., 819, Springer, Berlin, 1980.

\bibitem[GORH16]{GORH} Gogolev A., Ontaneda P., Rodriguez Hertz F. {\it
New partially hyperbolic dynamical systems I. }
Acta Math. 215 (2015), no. 2, 363--393. 

\bibitem[HP16]{HP} Hammerlindl A., Potrie R. {\it Partial hyperbolicity and classification: a survey.} arXiv:1511.04471

\bibitem[HP06]{HP2} Hasselblatt B., Pesin Ya. {\it
Partially hyperbolic dynamical systems.} Handbook of dynamical systems. Vol. 1B, 1--55, Elsevier B. V., Amsterdam, 2006. 

\bibitem[HT80]{HT} M. Handel, W. Thurston, Anosov flows on new three manifolds, {\it Invent. Math.} {\bf 59} (1980), no. 2, 95--103.

\bibitem[HHU07]{HHU2} Rodriguez Hertz M., Rodriguez Hertz F. and Ures R., {\it A survey on partially hyperbolic systems,} Fields Institute Communications, Partially Hyperbolic Dynamics
, Laminations and Teichm\"uller Flow, 51 (2007) 35--88.

\bibitem[HPS77]{HPS} Hirsch M., Pugh C., Shub M.
        {\em Invariant manifolds.}
        Lecture Notes in Math., 583, Springer-Verlag, (1977).
        
\bibitem[H05]{Huy} Huybrechts D. {\it
Complex geometry.
An introduction}. Universitext. Springer-Verlag, Berlin, 2005. xii+309 pp.
  

\bibitem[K79]{K} Katok, A. {\it Bernoulli diffeomorphisms on surfaces.} Ann. of Math. (2) 110 (1979), no. 3, 529--547. 

\bibitem[KL96]{KL} Katok A. Lewis J. {\it Global rigidity results for lattice actions on tori and new examples of volume-preserving actions.} Israel J. Math. 93 (1996), 253--280. 

\bibitem[M51]{Mal} Mal$'$cev A. I. {\it On a class of homogeneous spaces. }  Amer. Math. Soc. Translation 1951, (1951). no. 39, 33pp.     

\bibitem[HHU08]{HHU}  Rodriguez Hertz F., Rodriguez Hertz M., Ures R. {\it Partial hyperbolicity and ergodicity
in dimension three,} J. Mod. Dyn., 2, (2008) 187--208.
      
\bibitem[Pet06]{Petersen} Petersen P. {\it Riemannian geometry. Second edition.} Graduate Texts in Mathematics, 171. Springer, New York, 2006. xvi+401 pp.

\bibitem[S70]{sack} Sacksteder R, {\it Strongly Mixing Transformations,} Global Analysis, Proc. Symp.
in Pure Math., v. 14 (1970), 245--252.

\bibitem[Sm67]{smale} Smale S. {\it Differentiable dynamical systems.} Bull. Amer. Math. Soc. 73 1967 747--817.

\bibitem[S99]{stark} Stark C. W. {\it
Blowup and fixed points. } Geometry and topology in dynamics (Winston-Salem, NC, 1998/San Antonio, TX, 1999), 239--252, 
Contemp. Math., 246, Amer. Math. Soc., Providence, RI, 1999. 

\bibitem[T69]{tomter} Tomter P. {\it Anosov flows on infrahomogeneous spaces.} Thesis (Ph.D.) -- University of California, Berkeley. 1969. 66 pp.

\end{thebibliography}
\end{document}